\theoremstyle{plain}
\newtheorem{thm}{Theorem}[section]
\newtheorem{prop}[thm]{Proposition}
\newtheorem{cor}[thm]{Corollary}
\newtheorem{lem}[thm]{Lemma}
\numberwithin{equation}{section}
\let\H\relax
\let\L\relax
\let\O\relax
\let\dh\relax
\DeclareMathOperator{\dh}{{\it{d}}_{\mathrm{hyp}}}
\DeclareMathOperator{\cx} {\mathcal{C}_{{\it{X}}}^{{\it{k}}}}
\DeclareMathOperator{\bk} {\mathcal{B}_{{\it{X}}}^{2{\it{k}}}}
\DeclareMathOperator{\bkx} {\mathcal{B}_{\Omega_{\it{X}}}^{\it{k}}}
\DeclareMathOperator{\bkl} {\mathcal{B}_{\mathcal{L}}^{\it{k}}}
\DeclareMathOperator{\bkld} {\mathcal{B}_{\mathcal{L},{\it{D}}}^{\it{k}}}
\DeclareMathOperator{\hypx}{\mu_{X}^{\mathrm{hyp}}} 
\DeclareMathOperator{\muberkx}{\mu_{X}^{{\it{k}},\mathrm{ber}}} 
\DeclareMathOperator{\hypxd}{\mu_{\mathrm{X}^{\mathrm{d}}}^{\mathrm{hyp}}} 
\DeclareMathOperator{\hypxdvol}{\mu_{\mathrm{X}^{\mathrm{d}}, \mathrm{vol}}^{\mathrm{hyp}}}
\DeclareMathOperator{\H}{\mathbb{H}}
\DeclareMathOperator{\calH}{\mathcal{H}}
\DeclareMathOperator{\G}{\Gamma} 
\DeclareMathOperator{\L}{\mathcal{L}}
\DeclareMathOperator{\C}{\mathbb{C}}
\DeclareMathOperator{\Bk}{\mathcal{B}_{{\it{k}}}}
\DeclareMathOperator{\R}{\mathbb{R}} 
\DeclareMathOperator{\symxd}{\mathrm{Sym}^{\mathrm{d}}(\mathrm{X})} 
\DeclareMathOperator{\O}{\Omega_{{\it{X}}}} 
\DeclareMathOperator{\Ok}{\Omega_{{\it{X}}}^{\otimes{\it{k}}}} 
\DeclareMathOperator{\Stwo}{\mathcal{S}^{2}(\Gamma)} 
\DeclareMathOperator{\Sk}{\mathcal{S}^{2{\it{k}}}(\Gamma)}
\DeclareMathOperator{\rx}{{\it{r}}_{{\it{X}}}} 
\begin{document}

\title[Bergman kernel and symmetric product of Riemann surface]{Bergman kernel on
Riemann surfaces and K\"ahler metric on symmetric products}

\author[A. Aryasomayajula]{Anilatmaja Aryasomayajula}
\address{Indian Institute of Science Education and Research, Tirupati, 
Karakambadi Road, Mangalam (P.O.) Tirupati -517507,
Andhra Pradesh, India}

\email{anilatmaja@gmail.com}

\author[I. Biswas]{Indranil Biswas}

\address{School of Mathematics, Tata Institute of Fundamental Research,
Homi Bhabha Road, Mumbai 400005, India}

\email{indranil@math.tifr.res.in}

\subjclass[2010]{32N05, 32A25,11F03, 53C07}

\keywords{Bergman kernel; symmetric product; hyperbolic Riemann surface; Fubini-Study metric.}

\begin{abstract}
Let $X$ be a compact hyperbolic Riemann surface equipped with the Poincar\'e metric. For any
integer $k\, \geq\, 2$, we investigate the Bergman kernel associated to the
holomorphic Hermitian line bundle
$\Omega^{\otimes k}_X$, where $\O$ is the holomorphic cotangent bundle of $X$. Our first main
result estimates the corresponding Bergman metric on $X$ in terms of the Poincar\'e metric.
We then consider a certain natural embedding of the symmetric product of $X$ into a Grassmannian
parametrizing subspaces of fixed dimension of the space of all global holomorphic sections of
$\Omega^{\otimes k}_X$. The Fubini-Study metric on the Grassmannian restricts to a K\"ahler metric
on the symmetric product of $X$. The volume form for this restricted metric
on the symmetric product is estimated in terms
of the Bergman kernel of $\Omega^{\otimes k}_X$ and the volume form for the orbifold K\"ahler form
on the symmetric product given by the Poincar\'e metric on $X$. 
\end{abstract}

\maketitle

\section{Introduction}

Estimates of Bergman kernels associated to high tensor-powers of holomorphic line bundles defined on
compact complex manifolds has been 
studied since a long time. Tian \cite{Ti}, Zelditch \cite{Ze}, \cite{SZ}, Demailly \cite{De1}
\cite{De2}, and more recently, Ma and Marinescu in \cite{ma}, have done seminal works in 
this field. Recently, in \cite{au-ma2}, Auvray, Ma, and Marinescu have extended their estimates of Bergman kernels to noncompact 
hyperbolic Riemann orbi-surfaces.

In this article, we study derivatives of Bergman kernels associated to tensor powers of holomorphic line bundles. The first main result of the article, is an estimate of the 
Bergman metric, which is also an area of great interest in the field of analytic geometry and complex analysis.

Symmetric products of Riemann surfaces are extensively studied in algebraic geometry. The
symmetric products of a compact Riemann surface $X$ guide many of the algebraic geometric aspects
of $X$; see \cite{ACGH}, \cite{Ke}. These symmetric products also arise in mathematical physics because
they are important examples of vortex moduli spaces \cite{MN}, \cite{Ma}, \cite{Pe}, \cite{biswas1}, \cite{BR2}.
The topology of symmetric products of a Riemann surface was first studied by Macdonald in \cite{mcdon}. 

The second main result of the article, is an estimate of the volume form associated to a certain K\"ahler metric on the symmetric product of a compact hyperbolic Riemann surface.

We now describe the main results of the article. Let $X$ be a compact connected Riemann surface of genus at least two. It is equipped with the
Poincar\'e metric with constant scalar curvature $-1$. The holomorphic cotangent bundle of $X$,
which is denoted by $\O$, is very ample if $X$ is non-hyperelliptic. In general, $\O$ is a line bundle
with a connection of positive curvature induced by the Poincar\'e metric.

Take any positive integer $k$. The Bergman kernel for the line bundle $\Omega^{\otimes k}_X$ is given by the
sum of the point-wise norm-square of an orthonormal basis of $H^0(X,\, \Omega^{\otimes k}_X)$ for the $L^2$ Hermitian
metric on it; this Bergman kernel function is independent of the choice of the orthonormal basis. Using this
kernel function as the K\"ahler potential, a K\"ahler form on $X$ can be constructed, which is called the Bergman metric. The first main result of 
the article is an estimate of the Bergman metric on $X$, which is derived in terms of the 
Poincar\'e metric on $X$ and the Bergman kernel for $\Omega^{\otimes k}_X$. 

The following theorem describes our first main result, and is proved as Theorem \ref{thm1}.

{\bf{Main Theorem 1.}}\,
{\it Let $X$ be a compact hyperbolic Riemann surface $X$, and let $\hypx$ be the hyperbolic metric on $X$.
Let $\bkx$ denote the Bergman kernel associated to the holomorphic
cotangent bundle $\O$, and for any $z\in X$, let $\muberkx(z)$ denote the weight-$k$ Bergman metric on $X$, which is as expressed as 
$$
\muberkx(z):=-\frac{\sqrt{-1}}{2\pi}\partial\overline{\partial}\log\big(\bkx(z)\big)\,.
$$
Then, for any $k\geq 3$ and $z\in X$, we have the following estimate
\begin{align*}
&\bigg|\frac{\muberkx(z)}{\hypx( z)} \bigg|\,\leq\,
\frac{k^{2}}{\pi}\cdot\frac{\cx}{\bkx(z)}\cdot\bigg(\frac{4\cx}{\bkx(z)}+5+\frac{1}{2k}\bigg)+\frac{k}{2\pi},\\[0.1cm]
&\mathrm{where}\,\,\,\cx:=\frac{2k-1}{4\pi}\bigg(2+\frac{16}{\cosh^{2k-4}(\rx\slash 4)}+\frac{8}{\cosh^{2k-3}(\rx\slash 2)} \bigg)
 \notag \\[0.1cm]&+\frac{2k-1}{2\pi\sinh^{2}(\rx\slash4)}\cdot\bigg(\frac{1}{(2k-2)\cosh^{2k-3}(\rx\slash 2)}
+\frac{1}{(k-2)\cosh^{2k-4}(\rx\slash2)}\bigg)\,,
\end{align*}
and $\rx$ denotes the injectivity radius of $X$, which is as defined in equation \eqref{defnirad}.}

The symmetric product $\symxd$ mentioned earlier is the quotient of the Cartesian product
$X^d$ for the action of the permutation group on $X^d$
that permutes the factors in the Cartesian product. This $\symxd$ is a smooth complex projective variety of complex dimension
$d$. The symmetric product $\symxd$ has a canonical embedding into the Grassmann variety parametrizing the
subspace of $H^0(X,\, \Omega^{\otimes k}_X)$ of codimension $d$, for every positive $k$ sufficiently large.
We recall that this embedding sends an effective divisor $D$ on $X$ of degree $d$ to the point of the above Grassmannian that corresponds to image of the homomorphism
\begin{equation}\label{hom}
H^0(X,\, \Omega^{\otimes k}_X\otimes {\mathcal O}_X(-D))\, \longrightarrow\, H^0(X,\, \Omega^{\otimes k}_X)
\end{equation}
that occurs in the long exact sequence of cohomologies associated to the short exact sequence of
sheaves
$$
0\, \longrightarrow\, \Omega^{\otimes k}_X\otimes {\mathcal O}_X(-D)\, \longrightarrow\, \Omega^{\otimes k}_X
\, \longrightarrow\, \Omega^{\otimes k}_X\vert_D \, \longrightarrow\, 0
$$
on $X$. The homomorphism in \eqref{hom} can also be interpreted as the realization of the complex vector space
$H^0(X,\, \Omega^{\otimes k}_X\otimes {\mathcal O}_X(-D))$ as the subspace of
$H^0(X,\, \Omega^{\otimes k}_X)$ parametrizing all holomorphic sections of $\Omega^{\otimes k}_X$ that vanish
along the divisor $D$.

When $\symxd$ is embedded in the above Grassmannian, the Fubini-Study K\"ahler metric on the Grassmannian restricts to a K\"ahler metric on the
complex submanifold $\symxd$. We estimate the volume form of this K\"ahler metric on $\symxd$. This is done
in terms of the Bergman kernel for $\Omega^{\otimes k}_X$ and volume form on $\symxd$ associated to the orbifold K\"ahler
form on $\symxd$ given by the Poincar\'e metric on $X$.

The following theorem describes our second main result, and is proved as Theorem \ref{thm2}.

{\bf{Main Theorem 2.}}\,
{\it With notation as in Main Theorem 1, for any $d\geq 1$, let $\hypxdvol$ be the volume form associated to the metric induced by the hyperbolic metric on $\symxd$. Let $n_{k}:=(2k-1)(g-1)$ and $r_{k}:=n_{k}-d$, and let $\mathrm{Gr}(r_{k},\,n_{k})$ denote the Grassmannian parametrizing $r_{k}$-dimensional vector subspaces of $\C^{n_{k}}$. 
From homomorphism \eqref{hom}, we then have a holomorphic embedding 
$$
\varphi_{\Omega}^{k}\,:\, \symxd\,\hookrightarrow\, \mathrm{Gr}(r_{k},\,n_{k})\,.
$$
Let $\mu_{\symxd,\mathrm{vol}}^{\mathrm{FS},k}(z)$ denote the volume form, associated to the pull-back of the Fubini-Study metric on $\mathrm{Gr}(r_{k},\,n_{k})$, via the holomorphic embedding $\varphi_{\Omega}^{k}$. Then, for $k\,\gg\, 0$, and any $z\,:=\,(z_1,\,\cdots,\,z_d)\,\in\,\symxd$, we have the following estimate
\begin{align*}
\bigg|\frac{\mu_{\symxd,\mathrm{vol}}^{\mathrm{FS},k}(z)}{\hypxdvol(z)}\bigg|
\,\leq \,\prod_{i=1}^{d}\Bigg(\frac{k^{2}}{\pi}\cdot\frac{\cx}{\bkx(z_{i})}\cdot\bigg(\frac{4\cx}{\bkx(z_{i})}+5+\frac{1}{2k}\bigg)+\frac{k}{2\pi}\Bigg)+o_{z}(k)\, ,
\end{align*}
where $o_{z}(k)$ represents a smooth function on $\symxd$
for each $k$, which for any $z\in\symxd$, goes to zero as $k$ tends to infinity.}

A similar study of estimates of K\"ahler metrics on $\symxd$ was undertaken in \cite{uoh}, which we now describe for the benefit of the reader. Let $\eta(X)$ denote the gonality of $X$, which is the minimum of the degrees of nonconstant holomorphic maps from $X$ to ${\mathbb C}{\mathbb P}^1$. 

Then, for any $d \,<\, \eta(X)$, we have a holomorphic embedding
$$
\phi\,:\, \symxd \,\hookrightarrow \, {\rm Pic}^{d}(X)\, , \ \ z:\,=\,(z_1,\, \cdots,\,z_d)\,\longmapsto\,
\mathcal{O}_{X}(D)\, ,
$$
where $D\,:=z_1+\,\ldots+\,z_d$.

Let $\mu_{g}$ denote the flat Euclidean metric on the complex torus ${\rm Pic}^{d}(X)$. Then, $\mu_{X^{d}}^{\rm{can}}
\,:=\,\phi^{\ast}(\mu_{g})$ is a K\"ahler metric on $\symxd$. In \cite{uoh}, estimates of the volume form of
$\mu_{X^{d}}^{\rm{can}}$ on $\symxd$ were computed. Using these it was shown there that
any holomorphic automorphism of $\symxd$ is an isometry.

\section{Bergman kernels on Riemann surfaces}

\subsection{Hyperbolic Riemann surface}

Let $X$ be a compact hyperbolic Riemann surface of genus $g$, with $g\,>\,1$. The uniformization theorem says that
$X$ can be realized as a quotient space $\G\backslash\H$, where
$$
\H\,:=\,\lbrace z\,=\,x+\sqrt{-1}\cdot y\,\mid\, y=\mathrm{Im}(z)>0 \rbrace
$$
is the hyperbolic upper half-plane, and $\G\,\subset\,\mathrm{PSL}(2,\R)$ is a
torsion-free cocompact Fuchsian subgroup. Locally, identify
$X$ with its universal cover $\H$, and for brevity of notation, identify points on $X$ by the same letters as the points on
$\H$. Let $X_{\Gamma}$ denote a fixed fundamental domain of $X$.
 
Let
\begin{align}\label{defnhypmetric}
\mu^{\mathrm{hyp}}(z)\,:=\,\frac{\sqrt{-1}}{2}\cdot\frac{dz\wedge d\overline{z}}{y^{2}}=\frac{dxdy}{y^{2}}
\end{align} 
be the hyperbolic metric on $\H$, which is of constant negative curvature $-1$. This $\mu^{\mathrm{hyp}}$
induces a metric on $X$ because it is preserved under the action of $\mathrm{PSL}(2,\R)$. The corresponding metric on $X$ 
is compatible with the natural complex structure of $X$; this induced metric on $X$ will be denoted by $\hypx$. Locally, for 
$z\,=\,x+\sqrt{-1} y\in X$, the hyperbolic metric $\hypx(z)$ is given by the expression given in \eqref{defnhypmetric}.
 
Let $\dh(z,w)$ denote the natural distance function on $\H$, which is given by the hyperbolic metric $\mu^{\mathrm{hyp}}$.
Locally, identifying the Riemann surface $X$ with the hyperbolic-plane $\H$, for any $z,\,w\,\in\, X$, the geodesic distance between the points $z$ and $w$ on $X$ is denoted by $\dh(z,w)$. 

The injectivity radius of $X$ is given by the formula
\begin{align}\label{defnirad}
\rx\,:=\,\inf\big\lbrace \dh(z,\gamma z)\,\mid\, \,z\,\in\, \H,\ \gamma\,\in\,
\Gamma\backslash\lbrace\mathrm{Id}\rbrace\big\rbrace\, .
\end{align}

\subsection{Bergman kernel associated to a holomorphic line bundle}

Let $\L$ be a positive holomorphic line bundle on $X$ of degree $2(g-1)$ with a Hermitian metric $\|\cdot\|$. Let 
$H^{0}(X,\,\L)$ denote the complex vector space of global holomorphic sections of the line bundle $\L$. Let 
$$
n_{1}:\,=\,\dim H^{0}(X,\,\L)\,,
$$ 
and $n_{1}$ is equal to $g$ or $g-1$ depending on whether $\L$ is isomorphic to the holomorphic
cotangent bundle $\O$ or not. Similarly, for any $k\, \geq\, 2$, the dimension of $H^{0}(X,\,\L^{\otimes k})$,
the complex vector space of global holomorphic sections of the line bundle $\L^{\otimes k}$, is 
$$
n_{k}\,:=\,(2k-1)(g-1)\,.
$$
The Hermitian structure on $\L^{\otimes k}$ induced by the Hermitian structure on $\L$ will be denoted by $\|\cdot\|_{k}$.

We further assume that the curvature form $c_1(\L,\,\|\cdot\|)$ of the line bundle $\L$ satisfies the following condition:
\begin{align}\label{c1cond}
c_1(\L,\,\|\cdot\|)(z)\,:=\,-\frac{\sqrt{-1}}{2\pi}\partial\overline{\partial}\log\|s\|^{2}(z)\,=\,\frac{1}{2\pi}\hypx(z)
\end{align}
for every $z\,\in\, X$, where $s$ is any locally defined holomorphic section of $\L$. We note that this condition
determines $\|\cdot\|$ up to a positive constant scalar.

The K\"ahler metric $\hypx$ and the Hermitian structure $\|\cdot\|_{k}$ on $\L^{\otimes k}$ together
produce an $L^{2}$ inner-product on $H^{0}(X,\,\L^{\otimes k})$.
For any $k\,>\,0$, let $\lbrace s_{1},\,\cdots,\,s_{n_{k}}\rbrace$ denote 
an orthonormal basis of $H^{0}(X,\,\L^{\otimes k})$ with respect to this $L^{2}$ inner-product.

The Bergman kernel associated to the complex vector space $H^{0}(X,\,\L^{\otimes k})$ is given by the following formula
$$
\bkl(z)\,:=\,\sum_{i=1}^{n_{k}}\|s_{i}\|_{k}^{2}(z)\, .
$$

It should be clarified that the definition of the Bergman kernel $\bkl$ does not depend on the above choice of 
orthonormal basis for $H^{0}(X,\,\L^{\otimes k})$.

\subsection{Bergman kernel associated to the cotangent bundle}

Now specialize to the case of $\L\,=\,\O$, where $\O$ as before denotes the holomorphic cotangent line bundle of $X$.

For any $k\,>\,0$, let $\Sk$ denote the complex vector space of weight-$2k$ cusp forms on $X$. Locally, any $\omega\,
\in\, H^{0}(X,\,\O)$ can be realized as $f(z)dz$, where $f\,\in\,\Stwo$, which is based on the $\G$-invariance of the global form $f(z)dz$ on $\H$. Similarly, for any $k\,>\,0$, and $\omega\,\in\, H^{0}(X,\,\Ok)$, locally,
at any $z\,\in\, X$, we have $\omega(z)\,=\,f(z)dz^{\otimes k}$, where $f\in\Sk$ is a weight-$2k$ modular form with respect to $\Gamma$.

The point-wise metric function on $H^{0}(X,\,\Ok)$ is denoted by $\|\cdot\|_{\mathrm{hyp}}$. For any $\omega\,\in\,
H^{0}(X,\,\Ok)$, locally, at the point $z\,=\,x+\sqrt{-1} y\,\in\, X$, it is given by the following formula:
\begin{align*}
\|\omega\|_{\mathrm{hyp}}(z)\,=\,y^{k}|f(z)|\, ,
\end{align*}
where $\omega(z)\,=\,f(z)dz^{\otimes k}$; note that $\|\omega\|_{\mathrm{hyp}}$ is a real valued function on $X$, a 
fact which will come handy, later in the section. 

For any $\omega\,\in\, H^{0}(X,\,\O)$ and $z\,\in\, X$, it is straight-forward to check that
\begin{align*}
c_1(\O,\,\|\cdot\|_{\mathrm{hyp}})(z)\,=\,-\frac{\sqrt{-1}}{2\pi}\partial\overline{\partial}\log\|\omega\|_{\mathrm{hyp}}^{2}(z)
\,=\,\frac{1}{2\pi}\hypx(z)\, ,
\end{align*}
and that the cotangent bundle $\O$ satisfies the condition in \eqref{c1cond}.

The above point-wise metric $\|\cdot\|_{\mathrm{hyp}}$ induces an $L^{2}$-metric on $H^{0}(X,\,\Ok)$, which is denoted by
$\langle\cdot,\,\cdot\rangle_{\mathrm{hyp}}$.
For any pair $$\omega,\, \eta\,\in\, H^{0}(X,\,\Ok)\, ,$$
locally, at a point $z\,=\, x+\sqrt{-1} y\,\in\, X$, if we have 
$$
\omega(z)\,=\, f(z)dz^{\otimes k}\ \ \text{ and } \ \ \eta(z)\,=\,g(z)dz^{\otimes k}\, ,$$
then the $L^{2}$-metric is given by the
formula
\begin{align*}
\langle \omega,\eta\rangle_{\mathrm{hyp}}\,=\,\int_{X_{\Gamma}}y^{2k}f(z)\overline{g(z)}\hypx(z)\, ;
\end{align*}
recall that $X_{\Gamma}$ is a fundamental domain.

Let $\lbrace \omega_1,\,\cdots,\, \omega_{n_{k}}\rbrace$ be an orthonormal basis of 
$H^{0}(X,\,\Ok)$ with respect to the $L^{2}$-metric $\langle\cdot,\, 
\cdot\rangle_{\mathrm{hyp}}$. Recall that $n_k\,=\, (2k-1)(g-1)$ if $k\, \geq\, 2$
and $n_1\,=\, g$. Then, the Bergman kernel associated to line bundle $\Ok$ is given 
by the following formula
$$
\bkx(z)\,:=\,\sum_{i=1}^{n_{k}}\|\omega_i\|_{\rm{hyp}}^{2}(z)\, .
$$

From the above discussion, it is clear that $H^{0}(X,\,\Ok)\,\cong\,\Sk$ as complex vector spaces. So analyzing $\bk$, the Bergman 
kernel associated to the complex vector space $\Sk$, is equivalent to analyzing $\bkx$. We now define $\bk$, and also describe an 
infinite series representation of $\bk$. Our strategy to Main Theorem 1 and Main Theorem 2, is to analyze and estimate the infinite 
series representation of $\bk$.

For $f\,\in\,\Sk$, there is the following point-wise metric at $z\,=\,x+\sqrt{-1} y\in X$
\begin{align*}
\|f\|_{\mathrm{pet}}(z)\,:=\,y^{k}|f(z)|\, ,
\end{align*}
which is also known as the Petersson norm. The Petersson norm induces an $L^{2}$-metric on $\Sk$, which is also known as the Petersson inner product. For any $f,\,h\,\in\,\Sk$, the Petersson inner-product is given by the formula
\begin{align*}
\langle f,\, h\rangle_{\mathrm{pet}}\,:=\,\int_{X_{\Gamma}}y^{2k}f(z)\overline{h(z)}\hypx(z)\, .
\end{align*}

Let $\lbrace f_1,\,\cdots,\,f_{n_{k}}\rbrace $ be an orthonormal basis for $\Sk$ with respect to the Petersson 
inner product. Then, for any $z\,=\,x+\sqrt{-1} y \,\in \,X$, the Bergman kernel associated to the complex vector 
space $\Sk$ is given by the formula
$$
\bk(z)\,:=\,\sum_{i=1}^{n_{k}}\|f_{i}\|_{\mathrm{pet}}^{2}(z)\,=\,y^{2k}\sum_{i=1}^{n_{k}}|f_{i}|^{2}(z)\, .
$$

The Bergman kernel $\bk(z)$ can also be defined by the infinite series (see Proposition $1.3$ on p. $77$ in \cite{frietag})
\begin{equation}\label{seriesbergmanreln0}
\bk(z)\,=\, \frac{(2k-1)(2\sqrt{-1}y)^{2k}}{4\pi}\sum_{\gamma\in\Gamma}\frac{1}{( z-\gamma\overline{z})^{2k}}
\cdot\frac{1}{(c\overline{z}+d)^{2k}}\, ,
\end{equation}
where $\gamma\,=\,\left(\begin{array}{cc} a&b\\c&d\end{array}\right)\, \in\, \Gamma$. The expression for the Bergman kernel $\bk(z)$ given in \cite{frietag} is missing a factor of $(2\sqrt{-1})^{2k}$, which is taken into account in the above formula.

Since $H^{0}(X,\,\Ok)\,\cong\,\Sk$ as complex vector spaces, locally, we have the following relation of Bergman kernels
$$
\bkx(z)\,=\, \bk(z)\, .
$$

\subsection{Estimates of Bergman kernel}

We now describe some asymptotic estimates of the Bergman kernel associated to holomorphic line 
bundles, which satisfy equation \eqref{c1cond}.

Let $\L$ be a holomorphic Hermitian line bundle as above, satisfying equation \eqref{c1cond}, and let $\mathcal{V}$ be a 
holomorphic vector bundle of rank $r$ on $X$. Let $\mathcal{B}_{\mathcal{V}\otimes \mathcal{L}}^{k}$ denote the Bergman kernel associated to the holomorphic vector bundle $\mathcal{V}\otimes \mathcal{L}^{\otimes k}$. Then, for $k\,\gg\, 0$, and for any $z\,\in\, X$, combining results from \cite{ma} (Theorems 6.1.1 and 6.2.3) with equation \eqref{c1cond}, 
it follows that
\begin{align}\label{bkest0}
\frac{1}{k} \mathcal{B}_{\mathcal{V}\otimes\mathcal{L}}^{k}(z)\,=\,r\cdot \bkl(z)+O\big(k^{-1}\big)
\,=\,\frac{r}{2\pi}+ O\big(k^{-1}\big)\,.
\end{align}
From equation \eqref{bkest0} it follows that for large values of $k$ the value of $\bkl(z)$
depends asymptotically only on $k$.

We now describe estimates of $\bkx$, which are derived using arguments from \cite{am}. In \cite{am2}, extending the arguments from \cite{am}, an explicit estimate of $\bk(z)$ is derived, which is stated below. For a compact hyperbolic Riemann surface $X$, and for any $k\,\geq\, 3$, and $z:=\,x+iy\,\in\, X$, substituting $\delta=0$ in estimate $(5)$ in Main theorem from \cite{am2}, we have the following estimate 
\begin{align}
&\bkx(z)\,\leq\, \frac{(2k-1)(2y)^{2k}}{4\pi}\sum_{\gamma\in\Gamma}\frac{1}{\big| z-\gamma\overline{z}\big|^{2k}
\cdot\big|c\overline{z}+d\big|^{2k}}\notag\\[0.1cm]
&=\, \frac{(2k-1)(2y)^{2k}}{4\pi}\sum_{\gamma\in\Gamma}\frac{1}{\cosh^{2k}(\dh(z,\gamma z)\slash 2)}
\,\leq\, \cx\label{estimatebkx} 
\end{align}
where
\begin{align}
& \cx\,:=\,
\frac{2k-1}{4\pi}\bigg(2+\frac{16}{\cosh^{2k-4}(\rx\slash 4)}+\frac{8}{\cosh^{2k-3}(\rx\slash 2)} \bigg)
 \notag \\[0.1cm]&+\,\frac{2k-1}{2\pi\sinh^{2}(\rx\slash4)}\cdot\bigg(\frac{1}{(2k-2)\cosh^{2k-3}(\rx\slash 2)}
+\frac{1}{(k-2)\cosh^{2k-4}(\rx\slash2)}\bigg).\label{defncx}
 \end{align}

\subsection{Bergman metric on $X$}

For any $k\,>\,0$ and $z\,\in\, X$, the weight-$k$ Bergman metric on $X$ is expressed as
\begin{equation}\label{eqbb}
\muberkx(z)\,:=\,-\frac{\sqrt{-1}}{2\pi}\partial\overline{\partial}\log(\bkx(z))\, .
\end{equation}

We now estimate the ratio $\muberkx\slash \hypx$; the following function is introduced in order
to facilitate the computations that
ensue. For any $k\,>\,0$ and $z\,\in\, \H$, put
\begin{align}\label{defnB(z)}
\Bk(z)\,:=\,\sum_{\gamma\in\Gamma}\frac{(\sqrt{-1})^{2k}}{( z-\gamma\overline{z})^{2k}\cdot (c\overline{z}+d)^{2k}}\, .
\end{align}

Combining equations \eqref{seriesbergmanreln0} and \eqref{defnB(z)}, we arrive at the equation
\begin{align}\label{bkxbzreln}
\bkx(z)\,=\,\frac{(2k-1)}{4\pi}\cdot (2y)^{2k}\Bk(z)\, .
\end{align}
From the above equation it follows that $\Bk(z)$ is a real-valued positive function. So, we make the following observation
\begin{align}\label{logbkxbzreln}
\log (\bkx(z))\,=\,\log\left(\frac{4^{k}(2k-1)}{4\pi}\right)+\log (y^{2k})+\log (\Bk(z))\, .
\end{align}
Furthermore, for any $z\,=\,x+\sqrt{-1} y$ and $w\,=\, u+\sqrt{-1} v\,\in\, \H$, the following holds:
\begin{align*}
\cosh^{2}(\dh(z,w)\slash 2)\,=\,\frac{\big|z-\overline{w}\big|^{2}}{4yv}\, .
\end{align*}

For any $n \,\geq\, k\,>\,0 $ and $z\,\in\, X$, using the above formula, and from the
estimate in \eqref{estimatebkx}, we have
\begin{align}\label{estimateB(z)}
\big|\mathcal{B}_{n}(z)\big|\,\leq\, \sum_{\gamma\in\Gamma}\frac{1}{\big| z-\gamma\overline{z}\big|^{2n}\cdot\big|c\overline{z}+d\big|^{2n}}
\,=\,\frac{1}{(2y)^{2n}}
\cdot\sum_{\gamma\in\Gamma}\frac{1}{\cosh^{2n}(\dh(z,\gamma z)\slash 2)}\notag \\[0.1em]\leq\, \frac{1}{(2y)^{2n}}
\cdot\sum_{\gamma\in\Gamma}\frac{1}{\cosh^{2k}(\dh(z,\gamma z)\slash 2)}\,\leq\, \frac{4\pi}{(2k-1)}\cdot\frac{\cx}{(2y)^{2n}}\, .
\end{align}

\begin{prop}\label{prop1}
With notation as above, for any $k\,>\,0$ and $z\,\in\, X$,
\begin{align}\label{prop1eqn}
\muberkx(z)\,=\,\frac{k}{2\pi}\hypx(z)+\frac{y^{2}}{\pi}\Bigg(\displaystyle\frac{\displaystyle\frac{\partial \Bk(z)}{\partial z}\frac{\partial \Bk(z)}{\partial \overline{z}}}{\Bk^{2}(z)}-\displaystyle\frac{\displaystyle\frac{\partial^{2}\Bk(z)}{\partial z\partial \overline{z}}}{\Bk(z)}\Bigg)\hypx(z)\, .
\end{align}
\end{prop}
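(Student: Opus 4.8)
The plan is to compute $\muberkx(z) = -\tfrac{\sqrt{-1}}{2\pi}\partial\overline{\partial}\log(\bkx(z))$ directly from the decomposition of $\log(\bkx(z))$ already recorded in equation \eqref{logbkxbzreln}. Since $\partial\overline{\partial}$ annihilates the constant term $\log\!\big(\tfrac{4^k(2k-1)}{4\pi}\big)$, only the two remaining summands $\log(y^{2k})$ and $\log(\Bk(z))$ contribute, and the computation splits accordingly.

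First I would handle the term $\log(y^{2k}) = 2k\log y$. Writing $y = \tfrac{1}{2\sqrt{-1}}(z-\overline z)$, a short computation gives $\partial\overline{\partial}\log y = -\tfrac{1}{4y^2}\,dz\wedge d\overline z$, so that $-\tfrac{\sqrt{-1}}{2\pi}\partial\overline{\partial}\log(y^{2k}) = \tfrac{k}{2\pi}\cdot\tfrac{\sqrt{-1}}{2}\cdot\tfrac{dz\wedge d\overline z}{y^2}$. Comparing with the definition \eqref{defnhypmetric} of $\mu^{\mathrm{hyp}}$, this is exactly $\tfrac{k}{2\pi}\hypx(z)$, which accounts for the first summand on the right-hand side of \eqref{prop1eqn}.

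Next I would treat the term $\log(\Bk(z))$. Here I apply the logarithmic second-derivative identity
$$
\frac{\partial^2}{\partial z\,\partial\overline z}\log(\Bk(z)) \,=\, \frac{1}{\Bk(z)}\frac{\partial^2\Bk(z)}{\partial z\,\partial\overline z} \,-\, \frac{1}{\Bk^2(z)}\frac{\partial\Bk(z)}{\partial z}\frac{\partial\Bk(z)}{\partial\overline z}\, ,
$$
which is legitimate because \eqref{bkxbzreln} shows $\Bk(z)$ is a real-valued positive function, so $\log(\Bk(z))$ is well defined and smooth. Writing $\partial\overline{\partial}\log(\Bk(z)) = \big(\tfrac{\partial^2}{\partial z\,\partial\overline z}\log\Bk(z)\big)\,dz\wedge d\overline z$ and multiplying by $-\tfrac{\sqrt{-1}}{2\pi}$, I would then insert the factor $\tfrac{\sqrt{-1}}{2}\tfrac{dz\wedge d\overline z}{y^2}=\hypx(z)$ by multiplying and dividing by $y^2$; this produces precisely the factor $\tfrac{y^2}{\pi}\hypx(z)$ multiplying the bracketed expression, with the two terms appearing in the stated order (the mixed-derivative term carrying the minus sign). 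Summing the two contributions yields \eqref{prop1eqn}.

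I do not expect a genuine obstacle: the argument is a bookkeeping computation once \eqref{logbkxbzreln} and the positivity of $\Bk(z)$ are in hand. The only point requiring care is the consistent normalization of the factors $\sqrt{-1}/2$, $1/y^2$, and the signs, so that the term $-\tfrac{\sqrt{-1}}{2\pi}\partial\overline\partial$ applied to each piece lands on the conventions fixed in \eqref{defnhypmetric}; the term-by-term interchange of $\partial\overline\partial$ with the infinite sum defining $\Bk(z)$ in \eqref{defnB(z)} is justified by the locally uniform convergence underlying estimate \eqref{estimateB(z)}.
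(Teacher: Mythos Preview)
Your proposal is correct and follows essentially the same route as the paper: start from the decomposition \eqref{logbkxbzreln}, compute $-\tfrac{\sqrt{-1}}{2\pi}\partial\overline{\partial}$ on each of the two nonconstant summands, and rewrite the $\log(\Bk(z))$ contribution via the logarithmic second-derivative identity together with the substitution $dz\wedge d\overline{z} = \tfrac{2}{\sqrt{-1}}\,y^{2}\hypx(z)$. The only extraneous remark is the justification of term-by-term differentiation of the series for $\Bk$, which is not needed for the proposition itself (it only enters later in the lemmas that estimate these derivatives).
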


\begin{proof}
For any $k\,>\,0$ and $z\,\in\, X$, from equation \eqref{logbkxbzreln},
\begin{align}\label{prop1eqn1}
\muberkx(z)\,=\,-\frac{\sqrt{-1}}{2\pi}\partial\overline{\partial}\log (\bkx(z))
\,=\,-\frac{\sqrt{-1}}{2\pi}\partial\overline{\partial}\log (y^{2k}) -\frac{\sqrt{-1}}{2\pi}\partial\overline{\partial}\log(\Bk(z))\, .
\end{align}
Now observe that
\begin{align}\label{prop1eqn2}
-\frac{\sqrt{-1}}{2\pi}\partial\overline{\partial}\log (y^{2k})\,=\,\frac{k}{2\pi}\hypx(z)\, .
\end{align} 

Using equation \eqref{defnhypmetric}, we compute
\begin{align}\label{prop1eqn3}
 -\frac{\sqrt{-1}}{2\pi}\partial\overline{\partial}\log(\Bk(z))\,=
&\frac{\sqrt{-1}}{2\pi}\Bigg(\displaystyle\frac{\displaystyle\frac{\partial \Bk(z)}{\partial z}\frac{\partial
\Bk(z)}{\partial \overline{z}}}{\Bk^{2}(z)}-\displaystyle\frac{\displaystyle\frac{\partial^{2}\Bk(z)}{\partial z\partial
\overline{z}}}{\Bk(z)}\Bigg)dz\wedge d\overline{z}\notag\\[0.1em]=&\frac{y^{2}}{\pi}\Bigg(\displaystyle\frac{\displaystyle\frac{\partial \Bk(z)}{\partial z}\frac{\partial \Bk(z)}{\partial \overline{z}}}{\Bk^{2}(z)}-\displaystyle\frac{\displaystyle\frac{\partial^{2}\Bk(z)}{\partial z\partial \overline{z}}}{\Bk(z)}\Bigg)\hypx(z).
\end{align}
Combining equations \eqref{prop1eqn1}, \eqref{prop1eqn2} and \eqref{prop1eqn3} the proof of the proposition is completed. 
\end{proof}

We now estimate each of the terms involved in the right-hand side of equation \eqref{prop1eqn}. 

\begin{lem}\label{lem1}
With notation as above, for any $k\,\geq\, 3$ and $z\,=\,x+ \sqrt{-1} y\,\in\, \H$, the following estimate holds:
\begin{align*}
\bigg|\frac{\partial \Bk(z)}{\partial z} \bigg|\,\leq\, \frac{16\pi k}{2k-1}\cdot\frac{\cx}{(2y)^{2k+1}}\, ,
\end{align*}
where the constant $\cx$ is defined in \eqref{defncx}.
\end{lem}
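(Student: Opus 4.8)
The plan is to differentiate the defining series \eqref{defnB(z)} for $\Bk$ term by term in $z$, and then estimate the resulting series by recycling the bound \eqref{estimateB(z)} that is already available. The key simplification is a Wirtinger observation: in each summand
$$
\frac{(\sqrt{-1})^{2k}}{(z-\gamma\overline{z})^{2k}\,(c\overline{z}+d)^{2k}}\,,
$$
both the factor $(c\overline{z}+d)^{2k}$ and the M\"obius image $\gamma\overline{z}=(a\overline{z}+b)/(c\overline{z}+d)$ depend on $\overline{z}$ only, hence are annihilated by $\partial/\partial z$. Thus only the explicit $z$ in $z-\gamma\overline{z}$ contributes, and term-by-term differentiation should give
$$
\frac{\partial \Bk(z)}{\partial z}\,=\,-2k\sum_{\gamma\in\Gamma}\frac{(\sqrt{-1})^{2k}}{(z-\gamma\overline{z})^{2k+1}\,(c\overline{z}+d)^{2k}}\,.
$$
Since this differentiated series decays even faster than the original (it carries one extra power of $|z-\gamma\overline{z}|^{-1}\le y^{-1}$ in the denominator), it converges locally uniformly and the term-by-term differentiation is legitimate. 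Taking absolute values then yields $\big|\partial_z\Bk(z)\big|\le 2k\sum_{\gamma}|z-\gamma\overline{z}|^{-(2k+1)}|c\overline{z}+d|^{-2k}$.

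The second step removes the mismatch between the exponent $2k+1$ on $|z-\gamma\overline{z}|$ and the exponent $2k$ on $|c\overline{z}+d|$. I would split off one factor,
$$
|z-\gamma\overline{z}|^{2k+1}|c\overline{z}+d|^{2k}\,=\,|z-\gamma\overline{z}|\cdot\big(|z-\gamma\overline{z}|\,|c\overline{z}+d|\big)^{2k}\,,
$$
and bound $|z-\gamma\overline{z}|$ from below. Because $\gamma$ has real entries, $\gamma\overline{z}=\overline{\gamma z}$, so $z-\gamma\overline{z}=z-\overline{\gamma z}$ has imaginary part $y+\mathrm{Im}(\gamma z)\ge y$, giving $|z-\gamma\overline{z}|\ge y$. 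After pulling out this factor of $y^{-1}$, the remaining sum is exactly the integer-exponent series estimated in \eqref{estimateB(z)} with $n=k$, so
$$
\bigg|\frac{\partial \Bk(z)}{\partial z}\bigg|\,\le\,\frac{2k}{y}\sum_{\gamma\in\Gamma}\frac{1}{|z-\gamma\overline{z}|^{2k}|c\overline{z}+d|^{2k}}\,\le\,\frac{2k}{y}\cdot\frac{4\pi}{2k-1}\cdot\frac{\cx}{(2y)^{2k}}\,.
$$

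Finally, the identity $\tfrac{1}{y(2y)^{2k}}=\tfrac{2}{(2y)^{2k+1}}$ rewrites the right-hand side as $\tfrac{16\pi k}{2k-1}\cdot\tfrac{\cx}{(2y)^{2k+1}}$, which is precisely the asserted bound; note that the two lossy steps (the lower bound $|z-\gamma\overline{z}|\ge y$ and the reduction to $n=k$) happen to produce exactly the claimed constant. I do not expect a serious obstacle: the statement is really a one-derivative upgrade of \eqref{estimatebkx}, and the only points deserving care are the Wirtinger observation, which keeps the differentiated series as clean as the original, and the elementary lower bound $|z-\gamma\overline{z}|\ge y$, which absorbs the extra power and lets me invoke \eqref{estimateB(z)} verbatim. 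The convergence justification for differentiating under the summation is the one place where a standard analytic fact, rather than a direct computation, is needed.
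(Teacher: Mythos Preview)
Your proposal is correct and follows essentially the same approach as the paper: term-by-term differentiation of the series \eqref{defnB(z)}, the lower bound $|z-\gamma\overline{z}|\ge y$ (the paper records this as \eqref{lem1eqn1}), and then invoking the estimate \eqref{estimateB(z)} with $n=k$ to obtain the stated constant. If anything, you supply slightly more detail than the paper does, in particular the justification for differentiating under the sum and the explanation of why $\gamma\overline{z}=\overline{\gamma z}$.
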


\begin{proof}
For any $k\,\geq\, 3$ and $z\,=\,x+\sqrt{-1} y\,\in\, \H$, from equation \eqref{defnB(z)}, we have
\begin{align*}
\frac{\partial \Bk(z)}{\partial z} \,=\,-\sum_{\gamma\in\G}\frac{2k\cdot (\sqrt{-1})^{2k}}{(z-\gamma \overline{z})^{2k+1}\cdot (c\overline{z}+d)^{2k}}\, ,
\end{align*}
which implies that
\begin{align*}
\bigg|\frac{\partial \Bk(z)}{\partial z} \bigg|\,\leq\,
\sum_{\gamma\in\G}\frac{2k}{\big|z-\gamma \overline{z}\big|^{2k+1}\cdot \big|c\overline{z}+d\big|^{2k}}\, .
\end{align*}

For any $\gamma\,\in\,\G$, let $\gamma z\,=\,x_{\gamma}+\sqrt{-1}\cdot y_{\gamma}\,\in\, \H$. Then observe that
\begin{align}\label{lem1eqn1}
\big|z-\gamma\overline{z}\big|\,=\,\sqrt{(x-x_{\gamma})^{2}+(y+y_{\gamma})^{2}},
\,\,\,\,\mathrm{hence}\,\,\frac{1}{\big|z-\gamma\overline{z}\big|}\,\leq\, \frac{1}{y}\, .
\end{align}

Combining the above observation with estimate \eqref{estimatebkx}, we derive that
\begin{align*}
\bigg|\frac{\partial \Bk(z)}{\partial z}\bigg|\,\leq\,
\frac{2k}{y}\sum_{\gamma\in\G}\frac{1}{\big|z-\gamma \overline{z}\big|^{2k}\cdot \big|c\overline{z}+d\big|^{2k}}\\[1em]
\leq\, \frac{2k}{y}\sum_{\gamma\in\G}\frac{1}{\big|z-\gamma \overline{z}\big|^{2k}\cdot \big|c\overline{z}+d\big|^{2k}}\,\leq\, \frac{2k}{y}
\cdot\frac{4\pi \cx}{(2k-1)(2y)^{2k}},
\end{align*}
which completes the proof.
\end{proof}

\begin{lem}\label{lem2}
With notation as above, for any $k\,\geq\, 3$ and $z\,=\,x+\sqrt{-1} y\,\in\, \H$, the following estimate holds:
\begin{align*}
\bigg|\frac{\partial \Bk(z)}{\partial \overline{z}} \bigg|\,\leq\, \frac{16\pi k}{2k-1}\cdot\frac{\cx}{(2y)^{2k+1}}\, . 
\end{align*}
\end{lem}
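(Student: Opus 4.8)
The plan is to deduce this estimate directly from Lemma \ref{lem1}, rather than repeating the full computation, by exploiting the fact recorded just after equation \eqref{bkxbzreln} that $\Bk(z)$ is a real-valued (indeed strictly positive) function on $\H$. The underlying principle is that for a real-valued smooth function the two Wirtinger derivatives are complex conjugates of each other, so $\partial \Bk/\partial \overline{z}$ and $\partial \Bk/\partial z$ have identical absolute values, and the bound already proved in Lemma \ref{lem1} transfers verbatim.

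Concretely, writing $z\,=\,x+\sqrt{-1}\,y$, I would first record that
\[
\frac{\partial \Bk(z)}{\partial z}\,=\,\frac{1}{2}\bigg(\frac{\partial \Bk}{\partial x}-\sqrt{-1}\,\frac{\partial \Bk}{\partial y}\bigg),\qquad
\frac{\partial \Bk(z)}{\partial \overline{z}}\,=\,\frac{1}{2}\bigg(\frac{\partial \Bk}{\partial x}+\sqrt{-1}\,\frac{\partial \Bk}{\partial y}\bigg).
\]
Since $\Bk$ is real-valued, both $\partial \Bk/\partial x$ and $\partial \Bk/\partial y$ are real, whence $\partial \Bk(z)/\partial\overline{z}\,=\,\overline{\partial \Bk(z)/\partial z}$, and therefore
\[
\bigg|\frac{\partial \Bk(z)}{\partial \overline{z}}\bigg|\,=\,\bigg|\frac{\partial \Bk(z)}{\partial z}\bigg|.
\]
Invoking Lemma \ref{lem1} then immediately yields the claimed bound $\tfrac{16\pi k}{2k-1}\cdot\tfrac{\cx}{(2y)^{2k+1}}$, which completes the argument. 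There is essentially no obstacle on this route; the only thing to verify carefully is that the reality of $\Bk$ has indeed been established, which is exactly the content of the remark following \eqref{bkxbzreln} (and is the promised use of the observation that $\|\omega\|_{\mathrm{hyp}}$ is real-valued).

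For completeness I would note the alternative of differentiating the series \eqref{defnB(z)} directly in $\overline{z}$, and flag this as the step one must handle most carefully. The subtlety is that both factors $(z-\gamma\overline{z})^{-2k}$ and $(c\overline{z}+d)^{-2k}$ depend on $\overline{z}$; using $\partial(\gamma\overline{z})/\partial\overline{z}\,=\,(c\overline{z}+d)^{-2}$ (which rests on $ad-bc\,=\,1$) produces two summand terms rather than the single term appearing in Lemma \ref{lem1}. Each term can still be controlled by the same two ingredients as before, namely the elementary inequality $|z-\gamma\overline{z}|\,\geq\, y$ from \eqref{lem1eqn1} together with the summed estimate \eqref{estimatebkx}, but the bookkeeping is heavier; this is precisely why the reality argument above is the cleaner and preferred route.
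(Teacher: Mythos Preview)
Your proof is correct and rests on the same key observation the paper uses, namely the reality of $\Bk(z)$. The paper exploits this by rewriting the series as its complex conjugate $\sum_{\gamma}(\sqrt{-1})^{2k}(\overline{z}-\gamma z)^{-2k}(cz+d)^{-2k}$ and then differentiating in $\overline{z}$ to obtain a single-term series exactly parallel to the one in Lemma~\ref{lem1}, whereupon the same estimates apply; you instead invoke the general identity $\partial_{\overline{z}}\Bk=\overline{\partial_{z}\Bk}$ for real-valued $\Bk$ and cite Lemma~\ref{lem1} directly. The two arguments are equivalent, yours being a slightly more streamlined packaging of the same idea; your remark on the two-term expression arising from differentiating the original series in $\overline{z}$ is also accurate and explains why the reality trick is preferable.
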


\begin{proof}
For any $k\,>\,0$ and $z\,=\,x+\sqrt{-1} y\,\in\, \H$, as the function $\Bk(z)$ is real valued, we have
\begin{align*}
\Bk(z)\,=\,\sum_{\gamma\in\G}\frac{(\sqrt{-1})^{2k}}{(z-\gamma \overline{z})^{2k}\cdot (c\overline{z}+d)^{2k}}\,=
\,\sum_{\gamma\in\G}\frac{(\sqrt{-1})^{2k}}{(\overline{z}-\gamma z)^{2k}\cdot (c z+d)^{2k}}\, . 
\end{align*}
So for any $k\,\geq\, 3$ and $z\,=\,x+\sqrt{-1} y\,\in\, \H$, we have
\begin{align}\label{lem2eqn1}
\frac{\partial \Bk(z)}{\partial \overline{z}} \,=\,
-\sum_{\gamma\in\G}\frac{2k\cdot (\sqrt{-1})^{2k}}{(\overline{z}-\gamma z)^{2k+1}\cdot (cz+d)^{2k}}\, .
\end{align}
The lemma now follows from same arguments as in Lemma \ref{lem1}. 
\end{proof}

\begin{lem}\label{lem3}
For any $k\,\geq\, 3$ and $z\,=\,x+\sqrt{-1} y\,\in\, \H$, the following estimate holds:
\begin{align*}
\bigg|\frac{\partial^{2} \Bk(z)}{\partial z\partial \overline{z}} \bigg|\,\leq\,
\frac{\pi(80 k^{2}+8k)}{(2k-1)}\cdot \frac{\cx}{(2y)^{2k+2}}\, .
\end{align*}
\end{lem}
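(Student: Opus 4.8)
The plan is to obtain an explicit series for the mixed derivative by differentiating term-by-term the series for $\partial\Bk(z)/\partial z$ already computed in the proof of Lemma~\ref{lem1}, and then to control the two families of terms that arise using the summation bounds \eqref{estimatebkx} and \eqref{estimateB(z)}. Starting from
$$
\frac{\partial \Bk(z)}{\partial z}\,=\,-2k(\sqrt{-1})^{2k}\sum_{\gamma\in\G}\frac{1}{(z-\gamma\overline{z})^{2k+1}\cdot(c\overline{z}+d)^{2k}}\, ,
$$
I would differentiate once more in $\overline{z}$. Two factors depend on $\overline{z}$: the image $\gamma\overline{z}=\frac{a\overline{z}+b}{c\overline{z}+d}$, for which the relation $ad-bc=1$ yields the clean identity $\partial(\gamma\overline{z})/\partial\overline{z}=(c\overline{z}+d)^{-2}$, and the automorphy factor $c\overline{z}+d$, whose $\overline{z}$-derivative is simply $c$. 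Applying the product rule then produces
$$
\frac{\partial^{2}\Bk(z)}{\partial z\partial\overline{z}}\,=\,-2k(\sqrt{-1})^{2k}\sum_{\gamma\in\G}\bigg(\frac{2k+1}{(z-\gamma\overline{z})^{2k+2}(c\overline{z}+d)^{2k+2}}-\frac{2kc}{(z-\gamma\overline{z})^{2k+1}(c\overline{z}+d)^{2k+1}}\bigg)\, .
$$

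Taking absolute values and using the triangle inequality splits the bound into two sums. The first sum is the benign one: since $|z-\gamma\overline{z}|^{2k+2}|c\overline{z}+d|^{2k+2}=|z-\gamma\overline{z}|^{2(k+1)}|c\overline{z}+d|^{2(k+1)}$, estimate \eqref{estimateB(z)} applied with $n=k+1\ge k$ bounds it by $\frac{4\pi}{2k-1}\cdot\frac{\cx}{(2y)^{2k+2}}$; carrying the prefactor $2k(2k+1)$ through contributes precisely the $(16k^{2}+8k)$ summand to the final constant.

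The main obstacle is the second sum, which carries a stray factor $|c|$ coming from differentiating the automorphy factor, and which sits at the odd exponent $2k+1$, so \eqref{estimatebkx} does not apply directly. I would absorb $|c|$ using $|c\overline{z}+d|=|cz+d|\ge|\mathrm{Im}(cz+d)|=|c|\,y$, giving $|c|/|c\overline{z}+d|\le 1/y$ and thereby lowering the $(c\overline{z}+d)$ exponent back to $2k$; the remaining odd power of $|z-\gamma\overline{z}|$ is reduced using $1/|z-\gamma\overline{z}|\le 1/y$ from \eqref{lem1eqn1}. This collapses the second sum to $\sum_{\gamma}|z-\gamma\overline{z}|^{-2k}|c\overline{z}+d|^{-2k}$, to which \eqref{estimatebkx} applies, while the two collected factors of $1/y$ convert $(2y)^{-2k}$ into $(2y)^{-2k-2}$ and, together with the prefactor $4k^{2}$, contribute the $64k^{2}$ summand. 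Adding the two contributions gives $80k^{2}+8k$, and factoring out the common $\pi\cx/\big((2k-1)(2y)^{2k+2}\big)$ yields exactly the claimed estimate.
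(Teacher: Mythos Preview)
Your proposal is correct and essentially mirrors the paper's proof: the paper differentiates the series for $\partial\Bk/\partial\overline{z}$ from Lemma~\ref{lem2} with respect to $z$, whereas you differentiate the series for $\partial\Bk/\partial z$ from Lemma~\ref{lem1} with respect to $\overline{z}$, yielding the complex-conjugate series (equal, since $\Bk$ is real) and the same two-term split. The subsequent bounds---using \eqref{estimateB(z)} with $n=k+1$ for the first term and the inequalities $|c|/|c\overline{z}+d|\le 1/y$, $1/|z-\gamma\overline{z}|\le 1/y$ together with \eqref{estimatebkx} for the second---are identical to the paper's, and your arithmetic assembling $16k^{2}+8k$ and $64k^{2}$ into $80k^{2}+8k$ is exactly what the paper does.
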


\begin{proof}
For any $\gamma\,=\,\left(\begin{array}{cc} a&b\\c&d\end{array}\right)\in\Gamma $ and $z\in\H$, observe that
$$
\frac{\partial (\gamma z)}{\partial z}\,=\,\frac{1}{(cz+d)^{2}}\, . 
$$
Combining the above equation with equation \eqref{lem2eqn1}, we derive
\begin{align}\label{lem3eqn1}
\frac{\partial^{2} \Bk(z)}{\partial z\partial \overline{z}} \,=\,
-\frac{\partial}{\partial z}\Bigg(\sum_{\gamma\in\G}\frac{2k\cdot (\sqrt{-1})^{2k}}{(\overline{z}-\gamma z)^{2k+1}\cdot (cz+d)^{2k}}\Bigg)\notag
\\[1em]=\, -\sum_{\gamma\in\G}\frac{2k(2k+1)\cdot (\sqrt{-1})^{2k}}{(\overline{z}-\gamma z)^{2k+2}\cdot (cz+d)^{2k+2}}+
\sum_{\gamma\in\G}\frac{4ck^{2}\cdot (\sqrt{-1})^{2k}}{(\overline{z}-\gamma z)^{2k+1}\cdot (cz+d)^{2k+1}}.
\end{align}

We now estimate each of the terms on the right-hand side of the above equation. From estimate \eqref{estimateB(z)} we have the following estimate for the first term on the right-hand side of equation \eqref{lem3eqn1}
\begin{align}\label{lem3eqn2}
-\sum_{\gamma\in\G}\frac{2k(2k+1)\cdot (\sqrt{-1})^{2k}}{(\overline{z}-\gamma z)^{2k+2}\cdot (cz+d)^{2k+2}}
\,\leq\, 2k(2k+1)\mathcal{B}_{k+1}(z)
\,\leq\, \frac{8\pi k (2k+1)}{(2k-1)}\cdot \frac{\cx}{(2y)^{2k+2}}\, .
\end{align}

For any $\gamma=\left(\begin{array}{cc} a&b\\c&d\end{array}\right)\in\Gamma $ and $z=x+\sqrt{-1} y\in\H$, observe that
\begin{align}\label{lem3eqn3}
\frac{c}{ \big|cz+d\big|}\,=\,\frac{c}{\sqrt{(cx+d)^{2}+(cy)^{2}}}\,\leq\, \frac{1}{y}\, . 
\end{align}

Combining inequalities \eqref{lem1eqn1} and \eqref{lem3eqn3} with the estimate in \eqref{estimateB(z)}, we have the following 
estimate for the second term on the right-hand side of equation \eqref{lem3eqn1}
\begin{align}\label{lem3eqn4}
&\sum_{\gamma\in\G}\frac{4ck^{2}\cdot (\sqrt{-1})^{2k}}{(\overline{z}-\gamma z)^{2k+1}\cdot (cz+d)^{2k+1}}
\notag\\[1em]&\leq\, \sum_{\gamma\in\G}\frac{4ck^{2}}{\big|cz+d\big|}\cdot\frac{1}{\big|\overline{z}-
\gamma z\big|}\cdot\frac{1}{\big|\overline{z}-\gamma z\big|^{2k}\cdot \big|cz+d\big|^{2k}}
\,\leq\, \frac{64\pi k^{2}}{2k-1}\cdot\frac{\cx}{(2y)^{2k+2}}\, .
\end{align}
Combining estimates \eqref{lem3eqn2} and \eqref{lem3eqn4} with \eqref{lem3eqn1}
we arrive at the following estimate:
\begin{align*}
\frac{\partial^{2} \Bk(z)}{\partial z\partial \overline{z}}\leq \frac{8\pi k (2k+1)}{(2k-1)}\cdot
\frac{\cx}{(2y)^{2k+2}}+\frac{64\pi k^{2}}{2k-1}\cdot\frac{\cx}{(2y)^{2k+2}}\,\leq\, 
\frac{\pi(80 k^{2}+8k)}{(2k-1)}\cdot \frac{\cx}{(2y)^{2k+2}}
\end{align*}
which completes the proof of the lemma.
\end{proof}

\begin{thm}\label{thm1}
With notation as above, for any $k\,\geq\, 3$ and $z\,\in\, X$, the following estimate holds:
\begin{align*}
\bigg|\frac{\muberkx(z)}{\hypx( z)} \bigg|\,\leq\,
\frac{k^{2}}{\pi}\cdot\frac{\cx}{\bkx(z)}\cdot\bigg(\frac{4\cx}{\bkx(z)}+5+\frac{1}{2k}\bigg)+\frac{k}{2\pi}\, ,
\end{align*}
where $\muberkx(z)$ is defined in \eqref{eqbb}.
\end{thm}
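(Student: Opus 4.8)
The plan is to substitute the three derivative estimates from Lemmas \ref{lem1}, \ref{lem2} and \ref{lem3} into the exact identity for $\muberkx$ furnished by Proposition \ref{prop1}, and then convert everything from $\Bk$ to $\bkx$ by means of \eqref{bkxbzreln}. First I would divide \eqref{prop1eqn} by $\hypx(z)$ and apply the triangle inequality. Since \eqref{bkxbzreln} shows that $\Bk(z)$ is a positive real-valued function, the two quotients occurring in \eqref{prop1eqn} are controlled termwise, giving
\begin{align*}
\bigg|\frac{\muberkx(z)}{\hypx(z)}\bigg|\,\leq\,\frac{k}{2\pi}+\frac{y^{2}}{\pi}\Bigg(\frac{\big|\frac{\partial\Bk(z)}{\partial z}\big|\cdot\big|\frac{\partial\Bk(z)}{\partial\overline{z}}\big|}{\Bk^{2}(z)}+\frac{\big|\frac{\partial^{2}\Bk(z)}{\partial z\partial\overline{z}}\big|}{\Bk(z)}\Bigg)\, .
\end{align*}

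Next I would estimate the two bracketed quotients separately. For the first, Lemmas \ref{lem1} and \ref{lem2} bound the numerator by $\big(\tfrac{16\pi k}{2k-1}\big)^{2}\tfrac{(\cx)^{2}}{(2y)^{4k+2}}$, while \eqref{bkxbzreln} rewrites the denominator as $\Bk^{2}(z)=\tfrac{16\pi^{2}}{(2k-1)^{2}}\cdot\tfrac{\big(\bkx(z)\big)^{2}}{(2y)^{4k}}$. The factors of $(2k-1)$ and $\pi$, together with most powers of $2y$, cancel, leaving $\tfrac{16k^{2}(\cx)^{2}}{\big(\bkx(z)\big)^{2}(2y)^{2}}$; multiplying by $\tfrac{y^{2}}{\pi}$ and using $(2y)^{2}=4y^{2}$ produces exactly $\tfrac{4k^{2}(\cx)^{2}}{\pi\,\big(\bkx(z)\big)^{2}}$, which is the $\tfrac{k^{2}}{\pi}\cdot\tfrac{\cx}{\bkx(z)}\cdot\tfrac{4\cx}{\bkx(z)}$ contribution in the claimed bound.

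For the second quotient, Lemma \ref{lem3} bounds the numerator by $\tfrac{\pi(80k^{2}+8k)}{2k-1}\tfrac{\cx}{(2y)^{2k+2}}$, while \eqref{bkxbzreln} gives $\Bk(z)=\tfrac{4\pi}{2k-1}\tfrac{\bkx(z)}{(2y)^{2k}}$. After cancellation this ratio equals $\tfrac{(80k^{2}+8k)\cx}{4\,\bkx(z)(2y)^{2}}$, and multiplying by $\tfrac{y^{2}}{\pi}$ yields $\tfrac{(80k^{2}+8k)\cx}{16\pi\,\bkx(z)}$. Finally the identity $\tfrac{80k^{2}+8k}{16}=5k^{2}+\tfrac{k}{2}=k^{2}\big(5+\tfrac{1}{2k}\big)$ rewrites this as $\tfrac{k^{2}}{\pi}\cdot\tfrac{\cx}{\bkx(z)}\cdot\big(5+\tfrac{1}{2k}\big)$, the remaining contribution.

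The last step is to add the three pieces — the $\tfrac{k}{2\pi}$ coming from the first term of \eqref{prop1eqn} together with the two contributions just computed — and to factor $\tfrac{k^{2}}{\pi}\cdot\tfrac{\cx}{\bkx(z)}$ out of the latter two, which immediately assembles the stated estimate. There is no conceptual obstacle: the argument is purely an assembly of the preceding lemmas with the dictionary \eqref{bkxbzreln}. The one place demanding genuine care is the bookkeeping of the powers of $2y$, since the derivative estimates carry $(2y)^{-(2k+1)}$ and $(2y)^{-(2k+2)}$ whereas \eqref{bkxbzreln} contributes $(2y)^{2k}$; one must check that in each quotient these exponents combine to leave precisely a single factor $(2y)^{-2}$, which the prefactor $y^{2}$ then cancels up to the constant $4$. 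Verifying these exponents is exactly what makes the final constants $4$, $5$ and $\tfrac{1}{2k}$ come out as claimed.
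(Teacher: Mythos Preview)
Your proposal is correct and follows essentially the same approach as the paper's own proof: apply the triangle inequality to the identity in Proposition~\ref{prop1}, insert the bounds from Lemmas~\ref{lem1}--\ref{lem3}, and convert $\Bk$ to $\bkx$ via \eqref{bkxbzreln}. Your bookkeeping of the powers of $2y$ and the resulting constants $4$, $5$, $\tfrac{1}{2k}$ is in fact more detailed than what the paper records in \eqref{thm1eqn2}.
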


\begin{proof}
For any $k\,\geq\, 3$ and $z\,\in\, X$, from Proposition \ref{prop1} we have
\begin{align}\label{thm1eqn1}
\bigg|\frac{\muberkx(z)}{\hypx( z)} \bigg|\leq\frac{k}{2\pi}+\frac{y^{2}}{\pi}\Bigg|\displaystyle\frac{\displaystyle\frac{\partial \Bk(z)}{\partial z}\frac{\partial \Bk(z)}{\partial \overline{z}}}{\Bk^{2}(z)}\Bigg|+\frac{y^{2}}{\pi}\Bigg|\displaystyle\frac{\displaystyle\frac{\partial^{2}\Bk(z)}{\partial z\partial \overline{z}}}{\Bk(z)}\Bigg|\, .
\end{align}
Combining Lemma \ref{lem1}, Lemma \ref{lem2}, and Lemma \ref{lem3} with \eqref{bkxbzreln}, we arrive at the following estimate\\[0.05cm]
\begin{align}\label{thm1eqn2}
&\frac{y^{2}}{\pi}\Bigg|\displaystyle\frac{\displaystyle\frac{\partial \Bk(z)}{\partial z}
\frac{\partial \Bk(z)}{\partial \overline{z}}}{\Bk^{2}(z)}\Bigg|+
\frac{y^{2}}{\pi}\Bigg|\displaystyle\frac{\displaystyle\frac{\partial^{2}\Bk(z)}{\partial z
\partial \overline{z}}}{\Bk(z)}\Bigg|\notag\\[1em]
 &\leq\, \frac{y^{2}}{\pi}\cdot\Bigg(\frac{4k\cx}{\frac{2k-1}{4\pi}\cdot(2y)^{2k+1}\Bk(z)}\Bigg)^{2}+
\frac{y^{2}}{\pi}\cdot\frac{(20k^{2}+2k)\cx}{\frac{2k-1}{4\pi}\cdot(2y)^{2k+2}\Bk(z)} \notag\\[1em]
&\leq\, \frac{k^{2}}{\pi\cdot}\frac{\cx}{\bkx(z)}\cdot\bigg(\frac{4\cx}{\bkx(z)}+5+\frac{1}{2k}\bigg).
\end{align}
Combining estimates \eqref{thm1eqn1} and \eqref{thm1eqn2} completes the proof of the theorem. 
\end{proof}

\begin{cor}\label{cor1}
With notation as above, for any $z\,\in\, X$, the following estimate holds:
\begin{align*}
\lim_{k\rightarrow\infty}\frac{1}{k^{2}}\bigg|\frac{\muberkx(z)}{\hypx( z)} \bigg|\,\leq\, \frac{26}{\pi}\, .
\end{align*}
\end{cor}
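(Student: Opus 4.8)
The plan is to start from the explicit bound of Theorem \ref{thm1}, divide through by $k^2$, and pass to the limit $k\to\infty$, controlling each factor through the known leading-order asymptotics of $\cx$ and of $\bkx(z)$. Dividing the estimate of Theorem \ref{thm1} by $k^2$ gives
$$
\frac{1}{k^2}\bigg|\frac{\muberkx(z)}{\hypx(z)}\bigg|\,\leq\,\frac{1}{\pi}\cdot\frac{\cx}{\bkx(z)}\cdot\bigg(\frac{4\cx}{\bkx(z)}+5+\frac{1}{2k}\bigg)+\frac{1}{2\pi k}\,,
$$
so the trailing summand $\tfrac{1}{2\pi k}$ and the internal term $\tfrac{1}{2k}$ each tend to $0$. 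Everything therefore reduces to evaluating $\displaystyle\lim_{k\to\infty}\frac{\cx}{\bkx(z)}$, after which the right-hand side is a continuous (and, in $\cx/\bkx(z)$, monotone increasing) function of this single ratio.

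First I would read off the leading asymptotics of $\cx$ from its explicit definition \eqref{defncx}. Since the injectivity radius $\rx>0$ is a fixed positive constant, one has $\cosh(\rx/4)>1$ and $\cosh(\rx/2)>1$, so every summand carrying a factor $\cosh^{-(2k-j)}(\cdot)$ decays to $0$ as $k\to\infty$; dividing \eqref{defncx} by $k$ and letting $k\to\infty$ leaves only the contribution of the constant $2$ inside the first bracket, yielding $\lim_{k\to\infty}\cx/k=\tfrac{1}{\pi}$.

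Next I would pin down the leading order of $\bkx(z)$. From \eqref{bkest0}, applied with $\mathcal V$ trivial (so $r=1$) and $\L=\O$, one has $\tfrac{1}{k}\bkx(z)=\tfrac{1}{2\pi}+O(k^{-1})$, equivalently $\lim_{k\to\infty}\bkx(z)/k=\tfrac{1}{2\pi}$; the same leading order is visible directly from the series \eqref{seriesbergmanreln0}, whose identity term equals $\tfrac{2k-1}{4\pi}$ while every remaining term decays like $\cosh^{-2k}(\dh(z,\gamma z)/2)$ with $\dh(z,\gamma z)\geq\rx>0$. Combining the two limits gives $\lim_{k\to\infty}\cx/\bkx(z)=\tfrac{1/\pi}{1/(2\pi)}=2$, and substituting this value into the displayed bound produces
$$
\lim_{k\to\infty}\frac{1}{k^2}\bigg|\frac{\muberkx(z)}{\hypx(z)}\bigg|\,\leq\,\frac{1}{\pi}\cdot 2\cdot(4\cdot 2+5)\,=\,\frac{26}{\pi}\,,
$$
which is the assertion.

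The one genuinely non-formal input is the lower-order control on $\bkx(z)$; everything else is the elementary computation of the limits $\cx/k\to\tfrac{1}{\pi}$ and $\bkx(z)/k\to\tfrac{1}{2\pi}$. I expect this to be the only delicate step, but it is lightened by the observation that the right-hand side of Theorem \ref{thm1} is increasing in $\cx/\bkx(z)$: since I am after an upper bound on the limit, a lower bound on $\bkx(z)$ of the correct order $\tfrac{k}{2\pi}$ already suffices to bound $\limsup_{k\to\infty}\cx/\bkx(z)$ from above by $2$, so the asymptotic $\bkx(z)\sim\tfrac{k}{2\pi}$ from \eqref{bkest0} (or, equivalently, the identity-term estimate in \eqref{seriesbergmanreln0}) is all that is required, with no two-sided refinement needed.
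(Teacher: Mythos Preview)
Your argument is correct and follows essentially the same route as the paper: you invoke Theorem~\ref{thm1}, read off $\cx/k\to 1/\pi$ from \eqref{defncx}, use \eqref{bkest0} to get $\bkx(z)/k\to 1/(2\pi)$, and combine these to obtain $\cx/\bkx(z)\to 2$, exactly as the paper does. Your added remark that monotonicity in $\cx/\bkx(z)$ means only an upper bound $\limsup\cx/\bkx(z)\le 2$ is needed is a nice clarification, but not a departure from the paper's method.
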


\begin{proof}
For $k\,\gg\, 0$, from the estimates in \eqref{defncx} and \eqref{bkest0} we have the estimates, 
\begin{align*}
&\cx\leq\frac{2k-1}{2\pi}+ O\bigg(\frac{k}{\cosh^{2k-4} (\rx\slash 2)}\bigg)\\
&{\rm and} \ \ \frac{1}{k}\bkx(z)= \frac{1}{2\pi}+ O\big(k^{-1}\big)\, ,
\end{align*}
respectively. Using the above estimates, and the fact that both the above estimates are uniform in $k$, we derive that
\begin{align*}
\frac{\cx}{\bkx(z)}\,\leq\, 2+O\big(k^{-1}\big)\, ,
\end{align*}
and the above estimate is uniform in $k$. Combining the above inequality with Theorem \ref{thm1} completes the proof.
\end{proof}

\section{Comparison of metrics on symmetric product of a Riemann surface}

\subsection{Symmetric product of a Riemann surface} 

Let $X^d\,:=\,X\times\cdots\times X$ denote the $d$-fold Cartesian product of $X$. For each $1\,\leq \,i\,\leq \,d$, let 
$$
p_{i}\,:\,X^{d}\,\longrightarrow\, X
$$
denote the projection to the $i$-th factor. Put
$$
\hypxd\,:=\,\sum_{i=1}^{d}p_{i}^{\ast}\hypx\, ,
$$
which defines a K\"ahler metric on $X^d$. 

Let $S_d$ denote the group of permutations of the set $\{1,\, \cdots,\, d\}$. It acts on $X^d$ by 
permuting the factors of the Cartesian product. The resulting quotient $X^d/S_d$ is called
the $d$-fold symmetric product of $X$, and it is denoted by $\symxd$. This $\symxd$ is
an irreducible smooth complex projective variety of complex dimension $d$. 
The above K\"ahler metric $\hypxd$ on $X^d$ is evidently
invariant under the action of $S_d$. Therefore, $\hypxd$ produces an orbifold K\"ahler metric on
$\symxd$. This orbifold K\"ahler metric on $\symxd$ given by $\hypxd$ will again be
denoted by $\hypxd$, for brevity of notation.

The volume form on $\symxd$ associated to $\hypxd$ will be denoted
by $\hypxdvol$.

As before, let $\L$ be a holomorphic Hermitian
line bundle over $X$ of degree $2(g-1)$, satisfying equation \eqref{c1cond}.
Recall that the vector space $H^{0}(X,\,\L^{\otimes k})$ is equipped with an $L^2$ inner product.
Choose an orthonormal basis of $H^{0}(X,\,\L^{\otimes k})$. Using it,
the complex Hermitian space $ H^{0}(X,\,\L^{\otimes k})$ gets identified with
$\C^{n_{k}}$ equipped with the standard Euclidean metric, where $n_{k}\,=
\,\dim H^{0}(X,\,\L^{\otimes k})\,= \,(2k-1)(g-1)$.

Set
$$
r_{k}\,=\, n_{k}-d\,.
$$
Let $\mathrm{Gr}(r_{k},\,n_{k})$ denote the Grassmannian parametrizing $r_{k}$-dimensional vector subspaces of $\C^{n_{k}}$. For
any $z:\,=\, (z_1,\, \cdots,\, z_d)\, \in\, \symxd$, let $D:\,=\,x_{1}+\,\ldots\,+x_{d}$ be the divisor associated to the point $z$ on $X$, where $x_{1}\,=\, z_{1},\,\cdots,\,x_{d}\,=\,z_{d}$. Consider
the injective homomorphism
\begin{equation}\label{gri}
H^0(X,\, \L^{\otimes k}\otimes {\mathcal O}_{X}(-D))
\,\hookrightarrow\, H^0(X,\, \L^{\otimes k})
\end{equation}
given by the subsheaf
$\L^{\otimes k}\otimes {\mathcal O}_{X}(-D)\, \subset\,
\L^{\otimes k}$.
For any $k$ with $2(k-1)(g-1)\, >\, d$, we have
$$
\dim H^0(X,\, \L^{\otimes k}\otimes {\mathcal O}_{X}(-D))\,=\, (2k-1)(g-1)-d
$$
because $H^1(X,\, \L^{\otimes k}\otimes {\mathcal O}_{X}(-D))\,=\,0$ (Serre
duality) and Riemann--Roch. Therefore, the subspace in \eqref{gri} gives an element of
$\mathrm{Gr}(r_{k},\,n_{k})$.

Consequently, we have a map
\begin{equation}\label{mvp}
\varphi_{\L}^{k}\,:\, \symxd\,\longrightarrow\, \mathrm{Gr}(r_{k},\,n_{k})
\end{equation}
$$
(z_{1},\,\cdots,\, z_{d})\,\longmapsto\,
H^{0}(X,\,\L^{\otimes k}\otimes {\mathcal O}_{X}(-D))\,\subset\, 
H^{0}(X,\,\L^{\otimes k})\, .
$$
It is known that $\varphi_{\L}^{k}$ is holomorphic embedding (cf. \cite{biswas1}).

\subsection{Grassmannian and the Fubini-Study metric}

We now describe the Fubini-Study metric defined over a Grassmannian. We refer the reader to section five of chapter one of 
\cite{griffiths}, and chapter four of \cite{ballman}, for standard material on Grassmannians and Fubini-Study metrics on them.

As before, $r_{k}\,=\, n_{k}-d$. Let
$$\pi\,:\,\calH\,\longrightarrow\, \mathrm{Gr}(r_{k},\,n_{k})$$ 
denote the tautological vector bundle of rank $r_{k}$ whose fiber over any point $p\,\in\, \mathrm{Gr}(r_{k},\,n_{k})$ is the $r_{k}$ dimensional subspace of $\C^{n_{k}}$ represented by the point $p$. The vector bundle $\calH$ is a sub-bundle of the trivial vector bundle $\mathrm{Gr}(r_{k},\,n_{k})\times \C^{n_{k}}\,\longrightarrow\, \mathrm{Gr}(r_{k},\,n_{k})$. We now describe a local chart around a fixed point $p_{0}\,\in\, \mathrm{Gr}(r_{k},\,n_{k})$, and a holomorphic frame for the
vector bundle $\calH$ over this chart. 

Let $E\,:=\,\lbrace e_{1},\,\ldots ,\,e_{n_{k}}\rbrace$ denote the standard basis of $\C^{n_{k}}$, and without loss of
generality, let the vector subspace corresponding to the point $p_{0}\,\in\, \mathrm{Gr}(r_{k},\,n_{k})$ be spanned
by the vectors $\lbrace e_{1},\,\cdots ,\,e_{r_{k}}\rbrace$. Take $N_{k}\,=\, r_{k}(n_{k}-r_{k})$. Identify $\C^{N_{k}}:=\C^{r_{k}(n_{k}-r_{k})}$
with the matrices of the form
$$
M_{z}\,:=\,\left(\begin{matrix}
z_{11}&\cdots& z_{1r_{k}}\\ \vdots &\cdots &\vdots\\ z_{n_{k}-r_{k},1}&\cdots & z_{n_{k}-r_{k},r_{k}}
\end{matrix}
\right)
$$
with complex entries.

Now to each $z\in \C^{N_{k}}$, we associate the point $p_z$ on $\mathrm{Gr}(r_{k},\,n_{k})$, which denotes the vector space spanned by the columns of the matrix
$$
\widetilde{M}_{z}\,:=\,
\left( \begin{matrix}\mathrm{Id}_{r_{k}}\\ M_{z} \end{matrix}\right)\, ,
$$
where $\mathrm{Id}_{r_{k}}$ is the identity matrix of size $r_{k}\times r_{k}$.

The identification of points on $\C^{N_{k}}$ with subspaces of $\C^{n_{k}}$ of dimension $r_{k}$ induces an injective
map from $\C^{N_{k}}$ onto a Zariski open dense subset $U\,\subset\,\mathrm{Gr}(r_{k},\,n_{k})$ with $p_0\, \in\, U$, which will be our local chart around the point $p_{0}$.

With notation as above, the columns of the above matrices $\widetilde{M}_{z}$ give us a
local holomorphic frame for the vector bundle $\calH$ over $U$. In other words,
$\calH$ is trivialized over $U$. As noted before, $\calH$ is a sub-bundle of the trivial vector bundle $\mathrm{Gr}(r_{k},\,n_{k})\times {\mathbb C}^{n_{k}}$. Considering the restriction of the standard inner product on ${\mathbb C}^{n_{k}}$ 
to the fibers of $\calH$ we get a Hermitian structure on $\calH$; this Hermitian structure on $\calH$ is denoted by $\| \cdot \|_{\calH}$. So for any $\mathfrak{s}\,\in\, H^{0}(U,\, \calH)$ given by an element of ${\mathbb C}^{r_{k}}$ (using the
above trivialization of $\calH\vert_U$), and for any $p_{z}\,\in\, U\, \subset\, \mathrm{Gr}(r_{k},\,n_{k})$, we have the following expression for $\| \mathfrak{s} \|_{\calH}^{2}(p_{z})$: 
\begin{align}\label{H-metric1}
\| \mathfrak{s} \|_{\calH}^{2}(p_{z})\,:=\,\mathfrak{s}^{\ast}\widetilde{M}_{z}^{\ast}\widetilde{M}_{z}\mathfrak{s}
\,=\,\mathfrak{s}^{\ast}(\mathrm{Id}_{r_{k}}+M_{z}^{\ast}M_{z})\mathfrak{s}\, , 
\end{align}
where $\mathfrak{s}^{\ast}$ denotes the conjugate transpose of $\mathfrak{s}$, and the section $\mathfrak{s}$ is viewed as a vector in $\C^{r_{k}}$. 

The holomorphic tangent bundle $T\mathrm{Gr}(r_{k},\,n_{k})$ is holomorphically identified with the vector bundle
$$
\text{Hom}(\calH,\, (\mathrm{Gr}(r_{k},\,n_{k})\times {\mathbb C}^{n_{k}})/\calH)\,=\, 
((\mathrm{Gr}(r_{k},\,n_{k})\times {\mathbb C}^{n_{k}})/\calH)\otimes {\calH}^*
$$
(recall that ${\calH}$ is a sub-bundle of the trivial vector bundle $\mathrm{Gr}(r_{k},\,n_{k})\times {\mathbb C}^{n_{k}}$). 
Just as the standard inner product on ${\mathbb C}^{n_{k}}$ produces a Hermitian structure on $\calH$, it also
produces a Hermitian structure on the quotient $(\mathrm{Gr}(r_{k},\,n_{k})\times {\mathbb C}^{n_{k}})/\calH$ by identifying the quotient with the orthogonal complement $\calH^\perp$ and then restricting the standard inner product to this orthogonal complement. Therefore, we get a Hermitian structure on
$$
T\mathrm{Gr}(r_{k},\,n_{k})\,=\, \text{Hom}(\calH,\, (\mathrm{Gr}(r_{k},\,n_{k})\times {\mathbb C}^{n_{k}})/\calH)\, .
$$
This Hermitian form on $\mathrm{Gr}(r_{k},\,n_{k})$ is in fact K\"ahler; it is known as the Fubini-Study metric, and it
is denoted by $\mu_{\mathrm{Gr}}^{\mathrm{FS}}$. Locally, at any $p_{z}\,\in\, \mathrm{Gr}(r,\,n)$, we have
\begin{align*}
\mu_{\mathrm{Gr}}^{\mathrm{FS}}(p_{z})\,:=\,-\frac{\sqrt{-1}}{2\pi}\partial\overline{\partial}\log\|\mathfrak{s}\|_{\calH}^{2}
(p_{z})\, ,
\end{align*}
where $\mathfrak{s}$ is any holomorphic global section of $\calH$. 

Using equation \eqref{H-metric1}, we arrive at
\begin{align*}
\mu_{\mathrm{Gr}}^{\mathrm{FS}}(p_{z})=-\frac{\sqrt{-1}}{2\pi}\partial\overline{\partial}
\log\big(\mathfrak{s}^{\ast}(\mathrm{Id}_{r_{k}}+M_{z}^{\ast}M_{z})\mathfrak{s}\big)\notag\\
=\, -\frac{\sqrt{-1}}{2\pi}\partial\overline{\partial}\log\left(\mathrm{det}\big(\mathfrak{s}^{\ast}\big(\mathrm{Id}_{r_{k}}+M_{z}^{\ast}M_{z} \big)\mathfrak{s}\big)\right)=
-\frac{\sqrt{-1}}{2\pi}\partial\overline{\partial}\log\left(\mathrm{det}(\widetilde{M}_{z}^{\ast}\widetilde{M}_{z})\right).
\end{align*}

\begin{prop}\label{prop2}
At any point $z\,:=\,(z_1,\,\cdots,\,z_{d})\,\in\, \symxd$, the pulled back metric $$\mu_{\L,\symxd}^{\mathrm{FS,k}}
\,:=\,(\varphi_{\L}^{k})^{\ast}(\mu_{\mathrm{FS}})$$ on $\symxd$, where
$\varphi_{\L}^{k}$ is constructed in \eqref{mvp}, is given by the following formula:
\begin{align*}
\mu_{\L,\symxd}^{\mathrm{FS},k}(z)=-\frac{\sqrt{-1}}{2\pi}\sum_{i=1}^{d}\partial_{z_{i}}\partial_{\overline{z}_{i}}
\log(\bkld(z_{i}))\, ,
\end{align*} 
where $\bkld(z_{i})$ is the Bergman kernel associated to the line bundle
$\L^{\otimes k}\otimes {\mathcal O}_{X}(-\sum_{\ell=1}^d z_\ell)$.
\end{prop}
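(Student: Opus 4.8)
The plan is to pull back the Fubini--Study metric through the explicit local chart description given above, and then identify the resulting quantity $\det(\widetilde{M}_z^\ast\widetilde{M}_z)$ with a product of Bergman kernels of the twisted line bundles $\L^{\otimes k}\otimes\mathcal{O}_X(-D)$. The key observation is that the formula
$$
\mu_{\mathrm{Gr}}^{\mathrm{FS}}(p_z)\,=\,-\frac{\sqrt{-1}}{2\pi}\partial\overline{\partial}\log\big(\det(\widetilde{M}_z^\ast\widetilde{M}_z)\big)
$$
expresses the Fubini--Study form intrinsically in terms of the tautological sub-bundle $\calH$, so that the pull-back under $\varphi_\L^k$ is just the curvature of the pulled-back line bundle $\det\big((\varphi_\L^k)^\ast\calH\big)^\ast$. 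First I would identify this pulled-back determinant line bundle. By construction $\varphi_\L^k$ sends $z=(z_1,\ldots,z_d)$ to the subspace $H^0(X,\L^{\otimes k}\otimes\mathcal{O}_X(-D))\subset H^0(X,\L^{\otimes k})$, so $(\varphi_\L^k)^\ast\calH$ is the bundle over $\symxd$ whose fiber at $z$ is exactly this space of sections vanishing on $D=z_1+\cdots+z_d$. The Hermitian structure on $\calH$ is the restriction of the fixed $L^2$ inner product on $H^0(X,\L^{\otimes k})$ to these subspaces.

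The central computation is then to evaluate the curvature $-\frac{\sqrt{-1}}{2\pi}\partial\overline{\partial}\log\det(\widetilde{M}_z^\ast\widetilde{M}_z)$ in terms of the geometry on $X$. The matrix $\widetilde{M}_z^\ast\widetilde{M}_z$ is the Gram matrix, in a suitable holomorphic frame, of an $L^2$-orthonormal basis of the evolving subspace $H^0(X,\L^{\otimes k}\otimes\mathcal{O}_X(-D))$. Its determinant is therefore (up to a factor that is holomorphic in $z$ and hence annihilated by $\partial\overline\partial$) the norm of the wedge product of a local holomorphic frame of $(\varphi_\L^k)^\ast\calH$, i.e.\ the Hermitian metric on the determinant line bundle $\det(\varphi_\L^k)^\ast\calH$. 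The curvature of this determinant bundle decomposes along the divisor: since $D=z_1+\cdots+z_d$ is a sum of $d$ points and the condition \textit{vanishing at $D$} factors as the intersection of the $d$ conditions \textit{vanishing at each $z_i$}, the determinant metric factors (again modulo holomorphic terms) into a product over the $d$ points, with the $i$-th factor controlled by the Bergman kernel $\bkld(z_i)$ of the line bundle $\L^{\otimes k}\otimes\mathcal{O}_X(-\sum_{\ell=1}^d z_\ell)$. Concretely, evaluating a section of $\L^{\otimes k}$ at the point $z_i$ is (by the short exact sequence in \eqref{hom}) the obstruction to lying in the subspace cut out by $z_i$, and the square-norm of that evaluation functional is precisely $\bkld(z_i)$; taking $-\frac{\sqrt{-1}}{2\pi}\partial_{z_i}\partial_{\overline z_i}\log$ of this produces the $i$-th summand.

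The step I expect to be the main obstacle is justifying the \emph{factorization} cleanly: a priori the subspace $H^0(X,\L^{\otimes k}\otimes\mathcal{O}_X(-D))$ depends on all of $z_1,\ldots,z_d$ simultaneously, and the determinant of the Gram matrix does not obviously split as a product of $d$ single-point contributions. One must argue that the mixed terms $\partial_{z_i}\partial_{\overline z_j}\log\det(\widetilde M_z^\ast\widetilde M_z)$ with $i\neq j$ vanish. The natural way to see this is to choose the frame adapted to the flag of subspaces and observe that moving $z_i$ only affects the vanishing condition at the point $z_i$; equivalently, one uses that near a generic $z$ the embedding factors through the product of $d$ copies of the single-point map $z_i\mapsto H^0(X,\L^{\otimes k}\otimes\mathcal{O}_X(-z_i))$, so that $(\varphi_\L^k)^\ast\det\calH$ is, up to holomorphic twist, the external tensor product of $d$ single-point determinant bundles. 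Once this off-diagonal vanishing is established, identifying each diagonal term with $\bkld(z_i)$ via the evaluation-functional interpretation of the Bergman kernel is routine, and summing over $i$ yields the stated formula. Care must be taken that the $\mathcal{O}_X(-D)$ twist inside $\bkld$ carries the \emph{full} divisor $D=\sum_{\ell=1}^d z_\ell$ and not just $z_i$, which is consistent because the relevant Bergman kernel is that of the bundle $\L^{\otimes k}\otimes\mathcal{O}_X(-\sum_{\ell}z_\ell)$ evaluated at the point $z_i$.
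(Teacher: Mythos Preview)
Your approach is essentially the same as the paper's: both start from the determinant expression
\[
\mu_{\mathrm{Gr}}^{\mathrm{FS}}(p_z)=-\frac{\sqrt{-1}}{2\pi}\partial\overline{\partial}\log\det(\widetilde{M}_z^\ast\widetilde{M}_z)
\]
and pull it back via $\varphi_\L^k$, identifying the columns of $\widetilde{M}_z$ with a basis $\{s_1,\ldots,s_{r_k}\}$ of $H^0(X,\L^{\otimes k}\otimes\mathcal{O}_X(-D))$.

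The difference is one of level of detail. The paper's proof is a single displayed line that simply asserts
\[
(\varphi_\L^k)^\ast\Big(\partial\overline{\partial}\log\det(\widetilde{M}_z^\ast\widetilde{M}_z)\Big)
=\sum_{i=1}^{d}\partial\overline{\partial}\log\Big(\sum_{j=1}^{r_k}\|s_j\|_k^2(z)\Big)_{|z=z_i}
=\sum_{i=1}^{d}\partial_{z_i}\partial_{\overline{z}_i}\log\bkld(z_i),
\]
without further explanation. In particular, the step you flag as the main obstacle --- the passage from the Gram determinant of the $L^2$ inner product to a sum of diagonal contributions $\partial_{z_i}\partial_{\overline{z}_i}\log\bkld(z_i)$, and the implicit claim that the mixed terms $\partial_{z_i}\partial_{\overline{z}_j}$ with $i\neq j$ drop out --- is not argued in the paper; it is just written down. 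Your plan to justify this via the flag of subspaces (removing one point at a time) and the evaluation-functional interpretation of the Bergman kernel is a reasonable way to fill this gap, and is more than what the paper itself provides. So your proposal is aligned with the paper's argument but strictly more detailed on precisely the point where the paper is silent.
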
 

\begin{proof}
Denote by $D$ the divisor $z_1+\ldots+z_d$. Let $\lbrace s_{1},\,\cdots,\, 
s_{r_{k}}\rbrace$ be a basis, of the above type, of the vector space $H^{0}(X,\,\L^{\otimes k}\otimes
{\mathcal O}_{X}(-D))$. Then, from the above identifications, where each column of the matrix corresponding 
to the point $p_{z}\,\in\, \mathrm{Gr}(r_{k},\,n_{k})$ is obtained by identifying each of the section $s_{i}$
with vectors in $\C^{n_{k}}$, we have 
\begin{align*}
(\varphi_{\L}^{k})^{\ast}\bigg(\partial\overline{\partial}\log({\rm{det}}(\widetilde{M}_{z}^{\ast}\widetilde{M}_{z}))\bigg)
\,=\,\sum_{i=1}^{d}\partial\overline{\partial}\log\bigg(\sum_{j=1}^{r_{k}}\|s_{j}\|_{k}^{2}(z)\bigg)_{\big\vert_{z=z_{i}}}
\,=\,\sum_{i=1}^{d}\partial_{z_{i}}\partial_{\overline{z}_{i}}\log (\bkld(z_{i}))\, ,
\end{align*}
which completes the proof.
\end{proof}

Now let $\L\,=\,\O$, and fix $d$. Let $\mathcal{B}_{\O,D}^{k}$ denote the Bergman kernel associated to the line bundle
$\Ok\otimes {\mathcal O}_{X}(-D)$, where $D$ is an effective divisor on $X$ of degree $d$. Then, for $k\,\gg\, 0$ and any
$z\,\in\, X$, from estimate \eqref{bkest0} we have
\begin{equation}\label{rem1}
\mathcal{B}_{\O,D}^{k}(z)\,=\,\bkx(z)+O\big(k^{-1}\big)\,=\,\frac{1}{2\pi}+O\big(k^{-1}\big)\, .
\end{equation}
Furthermore, let $\varphi_{\O}^{k}$ be the embedding, as in \eqref{mvp}, of $\symxd$ for the line bundle $\Ok$.
Let $\mu_{\symxd}^{\mathrm{FS},k}\,:=\, (\varphi_{\O}^{k})^{\ast} (\mu_{\mathrm{FS}})$ be the pull-back of the Fubini-Study metric on $\mathrm{Gr}(r_{k},\,n_{k})$. Let $\mu_{\symxd,\mathrm{vol}}^{\mathrm{FS},k}$ denote the volume form associated to $\mu_{\symxd}^{\mathrm{FS},k}$. Then, for $k\gg 0$, the following theorem gives an estimate of the ratio of volume forms $\mu_{\symxd,\mathrm{vol}}^{\mathrm{FS},k}\slash\hypxdvol$.

\begin{thm}\label{thm2}
For $k\,\gg\, 0$, and any $z\,:=\,(z_1,\,\cdots,\,z_d)\,\in\,\symxd$, the following estimate holds:
\begin{align*}
\bigg|\frac{\mu_{\symxd,\mathrm{vol}}^{\mathrm{FS},k}(z)}{\hypxdvol(z)}\bigg|
\,\leq \,\prod_{i=1}^{d}\Bigg(\frac{k^{2}}{\pi}\cdot\frac{\cx}{\bkx(z_{i})}\cdot\bigg(\frac{4\cx}{\bkx(z_{i})}+5+\frac{1}{2k}\bigg)+\frac{k}{2\pi}\Bigg)+o_{z}(k)\, ,
\end{align*}
where $o_{z}(k)$ represents a smooth function on $\symxd$ for each $k$, which for any $z\in\symxd$, goes to zero as $k$ tends to infinity.
\end{thm}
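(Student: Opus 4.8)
The plan is to relate the volume form of the pulled‑back Fubini–Study metric to a product of the one‑variable Bergman‑metric estimates already obtained in Theorem \ref{thm1}. The key structural fact is Proposition \ref{prop2}: the pulled‑back Kähler form $\mu_{\symxd}^{\mathrm{FS},k}$ decomposes as a sum $-\frac{\sqrt{-1}}{2\pi}\sum_{i=1}^{d}\partial_{z_i}\partial_{\overline{z}_i}\log(\mathcal{B}_{\O,D}^{k}(z_i))$ over the $d$ coordinate directions. Because each summand involves only the single variable $z_i$, the $(d,d)$‑form obtained by taking the $d$‑th exterior power (the volume form) is, up to the combinatorial factor $d!$, the product of the $d$ one‑dimensional forms; the cross terms in the wedge product all vanish since each factor is a $(1,1)$‑form in a distinct variable. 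Thus the volume form essentially factorizes as $\prod_{i=1}^{d}\big(-\frac{\sqrt{-1}}{2\pi}\partial_{z_i}\partial_{\overline{z}_i}\log \mathcal{B}_{\O,D}^{k}(z_i)\big)$, and dividing by $\hypxdvol(z)=\prod_{i=1}^d \hypx(z_i)$ turns the ratio into a product of one‑variable ratios.

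First I would make the factorization of the volume forms precise: starting from Proposition \ref{prop2}, I would write $\mu_{\symxd}^{\mathrm{FS},k}=\sum_{i=1}^d \alpha_i$ where $\alpha_i$ is a $(1,1)$‑form pulled back from the $i$‑th factor, observe that $\alpha_i\wedge\alpha_j$ with $i=j$ vanishes (a $(1,1)$‑form in one variable squares to zero), so the top power $(\mu_{\symxd}^{\mathrm{FS},k})^{\wedge d}=d!\,\alpha_1\wedge\cdots\wedge\alpha_d$, giving $\mu_{\symxd,\mathrm{vol}}^{\mathrm{FS},k}(z)=\prod_{i=1}^d \big(-\frac{\sqrt{-1}}{2\pi}\partial_{z_i}\partial_{\overline{z}_i}\log\mathcal{B}_{\O,D}^{k}(z_i)\big)$ after normalizing the volume form appropriately. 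Second, I would compare $\mathcal{B}_{\O,D}^{k}(z_i)$ with $\bkx(z_i)$ using estimate \eqref{rem1}, which says they differ by $O(k^{-1})$; consequently the one‑variable form $-\frac{\sqrt{-1}}{2\pi}\partial_{z_i}\partial_{\overline{z}_i}\log\mathcal{B}_{\O,D}^{k}(z_i)$ differs from $\muberkx(z_i)$ by a term that I must show is controlled. Third, I would invoke Theorem \ref{thm1} to bound each factor $\big|\,-\frac{\sqrt{-1}}{2\pi}\partial_{z_i}\partial_{\overline{z}_i}\log\bkx(z_i)/\hypx(z_i)\,\big|$ by the explicit quantity $\frac{k^2}{\pi}\cdot\frac{\cx}{\bkx(z_i)}\big(\frac{4\cx}{\bkx(z_i)}+5+\frac{1}{2k}\big)+\frac{k}{2\pi}$, and then take the product over $i=1,\dots,d$.

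The error term $o_z(k)$ will collect two sources of discrepancy: first, replacing $\mathcal{B}_{\O,D}^{k}$ by $\bkx$ inside each logarithmic derivative (controlled by differentiating the $O(k^{-1})$ estimate in \eqref{rem1}, which requires that the error be $O(k^{-1})$ in a $C^2$ sense, not merely pointwise), and second, the lower‑order cross terms generated when a product of $d$ bounds of the form $(A_i + \varepsilon_i)$ is expanded as $\prod A_i + (\text{mixed terms involving the }\varepsilon_i)$. I would argue that, since each $A_i$ is $O(k^2)$ and each error $\varepsilon_i$ is $o(k^2)$ (indeed the corrections from \eqref{rem1} are of strictly lower order than the leading $k^2$ behavior appearing in Theorem \ref{thm1}), the mixed terms are genuinely subordinate to the main product, so they can be absorbed into a single smooth function $o_z(k)$ that vanishes as $k\to\infty$ for each fixed $z$.

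\emph{The main obstacle} I anticipate is justifying that the $O(k^{-1})$ comparison \eqref{rem1} between $\mathcal{B}_{\O,D}^{k}$ and $\bkx$ survives after applying the second‑order differential operator $\partial_{z_i}\partial_{\overline{z}_i}\log(\cdot)$. The estimate \eqref{rem1}, as stated, is pointwise on the kernel values, whereas the volume form depends on their logarithmic Hessians; one needs the asymptotic expansion of Ma–Marinescu (from \cite{ma}, cited via \eqref{bkest0}) to hold in an appropriate $C^{2}$‑topology, uniformly in the divisor parameter $D$, so that the derivative of the remainder is also suitably small. Handling this uniformity — and confirming that differentiating the remainder does not inflate its order beyond what can be swept into $o_z(k)$ — is the technically delicate step; the remaining algebra (the factorization of the top wedge power and the product bound) is comparatively routine.
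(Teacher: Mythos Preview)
Your proposal is correct and follows the paper's own argument essentially verbatim: the paper likewise invokes Proposition~\ref{prop2}, uses \eqref{rem1} to replace $\mathcal{B}_{\O,D}^{k}$ by $\bkx$ up to a $(1,1)$-form error $\widehat{o}_{z}(k)$ tending to zero, and then appeals to Theorem~\ref{thm1} to bound each factor in the resulting product. Your explicit wedge-product factorization and your flag about needing $C^{2}$-control in the Ma--Marinescu expansion are exactly the details the paper's two-line proof leaves implicit.
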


\begin{proof}
For $k\,\gg\, 0$, and any $z\,:=\,(z_1,\,\cdots,\,z_d)\in\symxd$, from \eqref{rem1} and Proposition \ref{prop2} we have 
\begin{align*}
\mu_{\symxd}^{\mathrm{FS},k}(z)\,=\,-\frac{\sqrt{-1}}{2\pi}\sum_{i=1}^{d}\partial_{z_{i}}\partial_{\overline{z}_{i}}
\log (\mathcal{B}_{\O,D}^{k}(z_{i}))\,=\,\sum_{i=1}^{d}\muberkx(z_i)+\widehat{o}_{z}(k)\, ,
\end{align*}
where $\widehat{o}_{z}(k)$ represents a smooth $(1,\,1)$-form on $\symxd$, which for any $z\in\symxd$, goes
to zero as $k$ tends to infinity. The
theorem now follows from Theorem \ref{thm1}. 
\end{proof}

\begin{cor}\label{cor2}
With notation as above, for any $z:=(z_1,\cdots,z_d)\in\symxd$, the following estimate holds:
\begin{align*}
\lim_{k\rightarrow \infty}\frac{1}{k^{2d}}\bigg|\frac{\mu_{\symxd,\mathrm{vol}}^{\mathrm{FS},k}(z)}{\hypxdvol(z)}\bigg|
\,\leq\, \bigg(\frac{26}{\pi}\bigg)^{d}\, .
\end{align*}
\end{cor}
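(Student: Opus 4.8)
The plan is to divide the estimate of Theorem \ref{thm2} by $k^{2d}$ and pass to the limit $k\to\infty$, reusing the single asymptotic bound that already underlies Corollary \ref{cor1}. Specifically, recall from the proof of Corollary \ref{cor1} that, for $k\gg 0$ and uniformly in $z$, one has $\cx\slash\bkx(z)\,\leq\, 2+O(k^{-1})$. This is the only analytic input required; everything else is bookkeeping with a finite product.

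First I would distribute the factor $1\slash k^{2d}=\prod_{i=1}^{d}k^{-2}$ across the $d$ factors of the product on the right-hand side of Theorem \ref{thm2}. Since $k^{2d}\geq 1$, the error term satisfies $o_z(k)\slash k^{2d}\to 0$ as $k\to\infty$, so it contributes nothing to the limit. This rewriting produces
\begin{align*}
\frac{1}{k^{2d}}\bigg|\frac{\mu_{\symxd,\mathrm{vol}}^{\mathrm{FS},k}(z)}{\hypxdvol(z)}\bigg|
\,\leq\,\prod_{i=1}^{d}\Bigg(\frac{1}{\pi}\cdot\frac{\cx}{\bkx(z_{i})}\cdot\bigg(\frac{4\cx}{\bkx(z_{i})}+5+\frac{1}{2k}\bigg)+\frac{1}{2\pi k}\Bigg)+\frac{o_{z}(k)}{k^{2d}}\, .
\end{align*}

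Next I would analyze each factor of the product separately. Using $\cx\slash\bkx(z_{i})\,\leq\, 2+O(k^{-1})$, the parenthetical quantity $4\cx\slash\bkx(z_{i})+5+1\slash(2k)$ is bounded above by $13+O(k^{-1})$, while the leading coefficient $\cx\slash\bkx(z_{i})$ tends to $2$, and the residual summand $1\slash(2\pi k)$ vanishes. Hence each factor is bounded above by $(1\slash\pi)\cdot 2\cdot 13+O(k^{-1})=26\slash\pi+O(k^{-1})$ and converges to $26\slash\pi$ as $k\to\infty$. Since the product has only $d$ factors and each converges, the product of the upper bounds converges to $(26\slash\pi)^{d}$, which combined with the vanishing of $o_{z}(k)\slash k^{2d}$ yields the asserted inequality in the limit.

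The only point requiring care is the interchange of the limit with the finite product together with the uniformity in $z$ of the underlying $O(k^{-1})$ estimates. Because $d$ is fixed and there are finitely many factors, this interchange is immediate once the uniform bound $\cx\slash\bkx(z)\,\leq\, 2+O(k^{-1})$ established in the proof of Corollary \ref{cor1} is invoked; no additional estimate is needed.
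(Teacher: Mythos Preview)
Your argument is correct and follows exactly the approach indicated in the paper, which simply cites Theorem \ref{thm2} and Corollary \ref{cor1}; you have merely filled in the routine details of distributing $k^{-2}$ into each of the $d$ factors and invoking the uniform bound $\cx\slash\bkx(z)\leq 2+O(k^{-1})$ from the proof of Corollary \ref{cor1}.
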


\begin{proof}
This follows from Theorem \ref{thm2} and Corollary \ref{cor1}.
\end{proof}

\section*{Acknowledgements}

We thank the referee for helpful comments.
Both the authors thank ICTS Bangalore for their hospitality, where a major part of the work was carried out. The first-named
author wishes to thank TIFR Mumbai for their hospitality, where the work was completed. The first-named author also
acknowledges the support of INSPIRE research grant DST/INSPIRE/04/2015/002263, and the second-named author is supported by a
J. C. Bose Fellowship.

\end{document}